\def\frk{\frak}               % font for "Fraktur"
\def\Phi{{\frk n}}
\def\Phi{{\frk N}}
\def\opn#1#2{\def#1{\operatorname{#2}}} % to make operators
\opn\chara{char} \opn\length{\ell} \opn\pd{pd} \opn\rk{rk}
\opn\projdim{proj\,dim} \opn\injdim{inj\,dim} \opn\rank{rank}
\opn\depth{depth} \opn\sdepth{sdepth} \opn\fdepth{fdepth}
\opn\grade{grade} \opn\height{height} \opn\embdim{emb\,dim}
\opn\codim{codim}  \opn\min{min} \opn\max{max}
\opn\Tr{Tr} \opn\bigrank{big\,rank}
\opn\superheight{superheight}\opn\lcm{lcm}
\opn\trdeg{tr\,deg}%\emph{
\opn\reg{reg} \opn\lreg{lreg} \opn\ini{in} \opn\lpd{lpd}
\opn\size{size}
\opn\div{div} \opn\Div{Div} \opn\cl{cl} \opn\Cl{Cl}
\opn\Spec{Spec} \opn\Supp{Supp} \opn\supp{supp} \opn\Sing{Sing}
\opn\Ass{Ass} \opn\Min{Min}
\opn\Ann{Ann} \opn\Rad{Rad} \opn\Soc{Soc}
\opn\Im{Im} \opn\Ker{Ker} \opn\Coker{Coker} \opn\Am{Am}
\opn\Hom{Hom} \opn\Tor{Tor} \opn\Ext{Ext} \opn\End{End}
\opn\Aut{Aut} \opn\id{id}  \opn\deg{deg}
\opn\nat{nat}
\opn\pff{pf}%   \pf exists already
\opn\Pf{Pf} \opn\GL{GL} \opn\SL{SL} \opn\mod{mod} \opn\ord{ord}
\opn\Gin{Gin} \opn\Hilb{Hilb}
\opn\aff{aff} \opn\con{conv} \opn\relint{relint} \opn\st{st}
\opn\lk{lk} \opn\cn{cn} \opn\core{core} \opn\vol{vol}
\opn\link{link} \opn\star{star}
\opn\gr{gr}
\def\pot#1#2{#1[\kern-0.28ex[#2]\kern-0.28ex]}
\opn\dirlim{\underrightarrow{\lim}}
\opn\inivlim{\underleftarrow{\lim}}
\let\Dirsum=\bigoplus
\let\to=\rightarrow
\def\Implies{\ifmmode\Longrightarrow \else
        \unskip${}\Longrightarrow{}$\ignorespaces\fi}
\def\implies{\ifmmode\Rightarrow \else
        \unskip${}\Rightarrow{}$\ignorespaces\fi}
\def\iff{\ifmmode\Longleftrightarrow \else
        \unskip${}\Longleftrightarrow{}$\ignorespaces\fi}
\newtheorem{Theorem}{Theorem}[section]
\newtheorem{Lemma}[Theorem]{Lemma}
\newtheorem{Proposition}[Theorem]{Proposition}
\newtheorem{Remark}[Theorem]{Remark}
\newtheorem{Conjecture}[Theorem]{Conjecture}
\let\epsilon\varepsilon
\let\phi=\varphi
\let\kappa=\varkappa
\def\qed{\ifhmode\textqed\fi
      \ifmmode\ifinner\quad\qedsymbol\else\dispqed\fi\fi}
\def\textqed{\unskip\nobreak\penalty50
       \hskip2em\hbox{}\nobreak\hfil\qedsymbol
       \parfillskip=0pt \finalhyphendemerits=0}
\def\dispqed{\rlap{\qquad\qedsymbol}}
\opn\dis{dis}
\def\pnt{{\raise0.5mm\hbox{\large\bf.}}}
\opn\Lex{Lex}
\begin{document}

\title{Three generated, squarefree, monomial ideals}

\author{Dorin Popescu and Andrei Zarojanu}

\date{}

\pagestyle{myheadings}
\markboth{Dorin Popescu and Andrei Zarojanu }{Three generated, squarefree, monomial ideals }

\maketitle 

\begin{abstract}
    Let $I\supsetneq J$ be  two  squarefree monomial ideals of a polynomial algebra over a field generated in degree $\geq d$, resp. $\geq d+1$ .  Suppose that  $I$ is generated  by  three monomials of degrees $d$. If  the Stanley depth of $I/J$ is $\leq d+1$ then the usual depth of $I/J$ is $\leq d+1$ too.
\end{abstract}

\begin{quotation}
\noindent{\bf Key Words}: {Monomial Ideals,  Depth, Stanley depth}

\noindent{\bf 2010 Mathematics Subject Classification}:  Primary 13C15,  \\
   Se\-con\-dary      Secondary 13F20, 13F55,
13P10.
\end{quotation}

%\noindent{\bf Running title}
%Short title for running head (top of right hand page)

\thispagestyle{empty}

\section{Introduction}

  Let  $S=K[x_1,\ldots,x_n]$, $n\in {\bf N}$, be a polynomial ring over a field $K$. Let $I\supsetneq J$  be two   squarefree monomial ideals of $S$ and  $u\in I \setminus J$ a monomial in $I/J$.
  For $Z\subset \{x_1,\ldots ,x_n\}$  with $(J:u)\cap K[Z]=0$, let $uK[Z]$ be the linear $K$-subspace of $I/J$ generated by the elements $uf$, $f\in K[Z]$.  A  presentation of $I/J$ as a finite direct sum of such spaces ${\mathcal D}:\
I/J=\Dirsum_{i=1}^ru_iK[Z_i]$ is called a {\em Stanley decomposition} of $I/J$. Set $\sdepth
(\mathcal{D}):=\min\{|Z_i|:i=1,\ldots,r\}$ and
\[
\sdepth\ I/J :=\max\{\sdepth \ ({\mathcal D}):\; {\mathcal D}\; \text{is a
Stanley decomposition of}\;  I/J \}.
\]

Stanley's Conjecture says that the {\em Stanley depth}  $\sdepth_S I/J\geq \depth_S I/J$.
 The Stanley  depth of $I/J$ is a combinatorial invariant and  does not depend on the characteristic of the field $K$. If $J=0$ then this conjecture holds for $n\leq 5$ by \cite{P2}, or when $I$ is an intersection of four monomial prime ideals by \cite{AP}, \cite{P3}, or   an intersection of three monomial primary ideals by \cite{Z}, or a monomial almost complete intersection by \cite{Ci}. The Stanley depth and the Stanley's Conjecture are similarly given when $I,J$ are not squarefree. In the non squarefree monomial ideals a useful inequality is $\sdepth I\leq \sdepth \sqrt{I}$ (see \cite[Theorem 2.1]{Is}).

Suppose that $I$ is generated by squarefree monomials of degrees $\geq d$   for some positive integer $d$. We may assume either that $J=0$, or $J$ is generated in degrees $\geq d+1$ after  a multigraded isomorphism.
We have $\depth_S I\geq d$ by \cite[Proposition 3.1]{HVZ} and it follows $\depth_S I/J\geq d$ (see  \cite[Lemma 1.1]{P}).
  Depth of $I/J$ is a homological invariant and  depends on the characteristic of the field $K$. 
  The Stanley decompositions of $S/J$ corresponds bijectively to partitions into intervals of the simplicial complex whose Stanley-Reisner ring is $S/J$. If Stanley's Conjecture holds then the simplicial complexes are partitionable (see \cite{HJY}). Using this idea an equivalent definition of Stanley's depth of $I/J$ was given in \cite{HVZ}.

   Let $P_{I\setminus J}$  be the poset of all squarefree monomials of $I\setminus J$  with the order given by the divisibility. Let ${\mathcal P}$ be a partition of  $P_{I\setminus J}$ in intervals $[u,v]=\{w\in  P_{I\setminus J}: u|w, w|v\}$, let us say   $P_{I\setminus J}=\cup_i [u_i,v_i]$, the union being disjoint.
Define $\sdepth {\mathcal P}=\min_i\deg v_i$. Then $\sdepth_SI/J=\max_{\mathcal P} \sdepth {\mathcal P}$, where ${\mathcal P}$ runs in the set of all partitions of $P_{I\setminus J}$ (see  \cite{HVZ}, \cite{S}). 

 For more than  thirty years the Stanley Conjecture was a dream for many people working in combinatorics and commutative algebra. Many  people believe that  this conjecture holds and tried to prove directly some of its consequences. For example in this way a lower bound of depth given by Lyubeznik \cite{L} was extended by Herzog at al. \cite{HPV} for sdepth.

Some numerical upper bounds of sdepth give also upper bounds of depth, which are independent of char $K$. More precisely, write $\rho_j(I \setminus J)$ for the number of all
squarefree monomials of degrees $j$ in $I \setminus J$.
\begin{Theorem}(Popescu \cite[Theorem 1.3]{P1}) \label{t} Assume that $\depth_S(I/J) \geq t$,
where $ t$ is an integer such that $ d\leq  t < n$. If
$\rho_{t+1}(I \setminus J) < \alpha_t := \sum_{i=0}^{t-d}(-1)^{t-d+i}
\rho_{d+i}(I \setminus J)$, then
$ \depth_S(I/J) = t$ independently of the characteristic of $K$.
\end{Theorem}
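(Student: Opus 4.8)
The plan is to extract from the numerical hypothesis the single nonvanishing statement $\Coh{t}{I/J}\neq 0$, and to do so by a rank count that is visibly independent of $\chara K$. Recall that for the finitely generated graded module $I/J\neq 0$ one has $\depth_S(I/J)=\min\{i:\Coh{i}{I/J}\neq 0\}$. The hypothesis $\depth_S(I/J)\geq t$ says exactly that $\Coh{i}{I/J}=0$ for $i<t$, so it remains only to prove that $\rho_{t+1}(I\setminus J)<\alpha_t$ forces $\Coh{t}{I/J}\neq 0$; together these give $\depth_S(I/J)=t$. The characteristic independence will come for free, because the bound I use on the relevant boundary map is purely a dimension count.

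The first step is to compute the $\mathbf 0$-th multigraded strand of local cohomology combinatorially. Since $I$ and $J$ are squarefree, the exact sequence $0\to I/J\to S/J\to S/I\to 0$ exhibits $I/J$ as a squarefree module in the sense of Yanagawa, so for every $W\subseteq\{x_1,\dots,x_n\}$ the localization satisfies $((I/J)_{x_W})_{\mathbf 0}\cong (I/J)_{\chi_W}$, which is $K$ when the squarefree monomial $x_W$ lies in $I\setminus J$ and is $0$ otherwise. Taking the degree-$\mathbf 0$ part of the \v Cech complex on $x_1,\dots,x_n$ therefore yields a finite complex of $K$-vector spaces
\[
C^\bullet:\quad 0\to C^d\to C^{d+1}\to\cdots\to C^n\to 0,\qquad \dim_K C^p=\rho_p(I\setminus J),
\]
placed in cohomological degree $p$ (the terms with $p<d$ vanish because $I$ is generated in degrees $\geq d$), whose differentials are the Hochster-type alternating sums and whose cohomology is $H^p(C^\bullet)=(\Coh{p}{I/J})_{\mathbf 0}$. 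In particular $H^p(C^\bullet)=0$ for $p<t$ by the depth hypothesis.

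The second step is a partial Euler characteristic. For any cochain complex the elementary identity
\[
\sum_{p=0}^{t}(-1)^{t-p}\dim_K C^p=\sum_{p=0}^{t}(-1)^{t-p}\dim_K H^p(C^\bullet)+\rank(C^t\to C^{t+1})
\]
holds over an arbitrary field. The left-hand side is $\sum_{p=d}^t(-1)^{t-p}\rho_p=\alpha_t$, while on the right all cohomology terms with $p<t$ vanish, leaving $\alpha_t=\dim_K H^t(C^\bullet)+\rank(C^t\to C^{t+1})$. Since the boundary map lands in $C^{t+1}$ we have $\rank(C^t\to C^{t+1})\leq\dim_K C^{t+1}=\rho_{t+1}$, whence $\dim_K H^t(C^\bullet)\geq\alpha_t-\rho_{t+1}>0$ by hypothesis. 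Thus $(\Coh{t}{I/J})_{\mathbf 0}\neq 0$, so $\Coh{t}{I/J}\neq 0$ and $\depth_S(I/J)\leq t$. Because the inequality $\rank(C^t\to C^{t+1})\leq\rho_{t+1}$ and the values $\rho_p$ do not depend on $\chara K$, the forced nonvanishing of $H^t(C^\bullet)$ holds in every characteristic, which is the source of the characteristic-independent conclusion.

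I expect the main obstacle to be the squarefree-module input of the second paragraph: identifying the degree-$\mathbf 0$ strand of the \v Cech complex of $I/J$ with a complex whose term in cohomological degree $p$ has dimension exactly $\rho_p(I\setminus J)$, and checking that $H^p(C^\bullet)=(\Coh{p}{I/J})_{\mathbf 0}$ carries the Hochster differentials. Once this dictionary between local cohomology and face counts is in place, the rest is the rank–nullity bookkeeping above; the only delicate point is that the depth hypothesis is used solely to annihilate the cohomology of $C^\bullet$ below degree $t$, so that the single surviving term of the partial Euler characteristic is $\dim_K H^t(C^\bullet)$, and this is what lets a bare count of squarefree monomials certify $\depth_S(I/J)=t$ regardless of the characteristic.
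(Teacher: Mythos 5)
Your proof is correct, but note that the paper itself contains no proof of Theorem \ref{t}: it is quoted from \cite{P1}, with the remark that the proof there ``uses Koszul homology and is not very short.'' Your argument is the local-cohomology mirror of that one. In \cite{P1} one restricts the Koszul complex $K_\bullet(x_1,\dots,x_n;I/J)$ to the squarefree multidegree $(1,\dots,1)$, where the term in homological position $n-p$ has dimension $\rho_p(I\setminus J)$, and the same partial Euler characteristic plus a rank bound forces $H_{n-t}(x;I/J)\neq 0$, hence $\depth_S(I/J)\leq t$; you instead take the degree-$\mathbf 0$ strand of the \v{C}ech complex, using Yanagawa's squarefree-module theory to identify $((I/J)_{x_W})_{\mathbf 0}$ with $(I/J)_{\chi_W}$, and force $(\Coh{t}{I/J})_{\mathbf 0}\neq 0$. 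The two complexes are exchanged by Koszul self-duality, so the numerics are identical; what your formulation buys is brevity and a transparent source for the characteristic-independence, since the terms $C^p$ and the bound $\rank(C^t\to C^{t+1})\leq\rho_{t+1}$ are bare monomial counts (in spirit this is close to Shen's Hilbert-depth proof of the stronger Theorem \ref{t1}, which also reduces everything to an enumerative statement about graded dimensions). Your telescoping identity
$$\sum_{p=0}^{t}(-1)^{t-p}\dim_K C^p=\sum_{p=0}^{t}(-1)^{t-p}\dim_K H^p(C^\bullet)+\rank(C^t\to C^{t+1})$$
checks out, as does the use of the depth hypothesis solely to kill $H^p(C^\bullet)$ for $p<t$; the one ingredient you should make explicit (or cite) is that $I/J$ is indeed a squarefree module --- it is the kernel of the map of squarefree modules $S/J\to S/I$, and the category of squarefree modules is closed under kernels --- so that the localization computation $((I/J)_{x_W})_{\mathbf 0}\cong (I/J)_{\chi_W}$ is legitimate.
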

The proof uses Koszul homology and is not very short. An extension is given below.
\begin{Theorem} ( Shen \cite[Theorem 2.4]{Sh})\label{t1} Assume that $\depth_S(I/J) \geq t$, where $t$ is an
integer such that $d \leq t < n$. If for some $k$ with $d + 1 \leq k \leq t + 1$ it holds
 $\rho_k(I \setminus J) <\sum_{j=d}^{k-1}(-1)^{k-j+1}{t + 1 - j\choose k - j}\rho_j(I \setminus J)$,
then $\depth_S(I/J) = t$ independently of the characteristic of $ K$.
\end{Theorem}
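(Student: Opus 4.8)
The plan is to prove the contrapositive of the essential content. Write $M=I/J$, and abbreviate Shen's hypothesis for a fixed $k$ as $\gamma_k<0$, where
\[
\gamma_k:=\sum_{j=d}^{k}(-1)^{k-j}\binom{t+1-j}{k-j}\rho_j(I\setminus J);
\]
indeed the $j=k$ term contributes $\rho_k$, and moving it to the left reproduces the displayed inequality $\rho_k<\sum_{j=d}^{k-1}(-1)^{k-j+1}\binom{t+1-j}{k-j}\rho_j$. Since $\depth_S M\ge t$ is assumed, it suffices to show that $\depth_S M\ge t+1$ forces $\gamma_k\ge 0$ for every $k$; then the existence of a $k$ with $\gamma_k<0$ rules out $\depth_S M\ge t+1$ and leaves $\depth_S M=t$. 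The whole argument runs through the coarsely graded Hilbert series of $M$: first I express it in terms of the $\rho_j$, and then I read off the $\gamma_k$ as Hilbert-function values of a suitable quotient.

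For the first step, regard $I$ and $J$ as Stanley--Reisner ideals of simplicial complexes $\Delta_I\subseteq\Delta_J$ on $\{x_1,\dots,x_n\}$. A squarefree monomial of degree $j$ lies in $I\setminus J$ exactly when the corresponding subset is a face of $\Delta_J$ but not of $\Delta_I$, so $\rho_j(I\setminus J)$ counts the $j$-element faces of $\Delta_J\setminus\Delta_I$. The standard fine Hilbert series of a Stanley--Reisner ring gives $H_{S/I}(s)=\sum_{\sigma\in\Delta_I}\bigl(s/(1-s)\bigr)^{|\sigma|}$ and likewise for $J$; subtracting and using $H_M=H_{S/J}-H_{S/I}$ yields
\[
H_M(s)=\sum_{j=d}^{n}\rho_j(I\setminus J)\,\frac{s^{\,j}}{(1-s)^{\,j}}.
\]
Multiplying by $(1-s)^{t+1}$ and expanding each $s^j(1-s)^{t+1-j}$ shows that the coefficient of $s^k$ is precisely $\gamma_k$, i.e. $(1-s)^{t+1}H_M(s)=\sum_k\gamma_k s^k$. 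Thus the combinatorial quantity in Shen's inequality is exactly the numerator obtained from $H_M$ by clearing $t+1$ factors of $(1-s)$.

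The depth hypothesis enters in the second step. After a faithfully flat extension of the base field (which changes neither the $\rho_j$ nor $\depth_S M$) I may assume $K$ infinite. If $\depth_S M\ge t+1$, then a generic sequence $\ell_1,\dots,\ell_{t+1}$ of linear forms is $M$-regular, and each regular linear form multiplies the Hilbert series by $(1-s)$ via $0\to N(-1)\xrightarrow{\ell}N\to N/\ell N\to 0$; hence $\bar M:=M/(\ell_1,\dots,\ell_{t+1})M$ has Hilbert series $(1-s)^{t+1}H_M(s)=\sum_k\gamma_k s^k$. Since $\gamma_k=\dim_K\bar M_k\ge 0$ for all $k$, no $\gamma_k$ can be negative, the desired contradiction. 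This proves $\depth_S M\le t$, and together with the standing $\depth_S M\ge t$ gives $\depth_S M=t$; as the conditions $\gamma_k<0$ are purely numerical, the bound $\depth_S M\le t$ holds regardless of $\chara K$, which is the asserted independence. Taking $k=t+1$ recovers Theorem \ref{t}, since there $\gamma_{t+1}=\rho_{t+1}-\alpha_t$.

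The only genuinely delicate point is the implication ``$\depth_S M\ge t+1$ yields a length-$(t+1)$ regular sequence of linear forms'', i.e. that generic linear forms realize the depth. This is standard over an infinite field by prime avoidance applied successively to the associated primes of the partial quotients $M/(\ell_1,\dots,\ell_i)M$, but the reduction to an infinite field has to be arranged so as not to disturb $\depth$. Everything else is bookkeeping with the binomial expansion of $s^j(1-s)^{t+1-j}$.
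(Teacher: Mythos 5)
Your proof is correct. Note, however, that the paper contains no proof of this statement: Theorem \ref{t1} is quoted from Shen \cite[Theorem 2.4]{Sh}, and the authors only remark that Shen's proof is very short because it rests on the Hilbert depth of Bruns--Krattenthaler--Uliczka \cite{BKU}. Your argument computes exactly the same quantity that drives Shen's proof, namely the coefficient $\gamma_k$ of $s^k$ in $(1-s)^{t+1}H_{I/J}(s)$ (your identification is valid precisely because $k\leq t+1$: for $k>t+1$ the summands with $j>t+1$, which are not polynomials in $s$, would also contribute to that coefficient, but the hypothesis never takes you there). The genuine difference is how nonnegativity of these coefficients is tied to depth: Shen invokes the BKU criterion that the Hilbert depth of $M$ is at least $m$ if and only if $(1-s)^m H_M(s)$ has nonnegative coefficients, together with the inequality $\depth_S M\leq$ Hilbert depth of $M$, whereas you inline the relevant direction elementarily --- after a faithfully flat extension to an infinite field (which indeed preserves both $\depth$ and the Hilbert function, and leaves the $\rho_j$ untouched since the ideals are monomial), $\depth_S M\geq t+1$ produces a regular sequence of $t+1$ linear forms by successive prime avoidance, so $(1-s)^{t+1}H_M(s)$ is itself a Hilbert series and hence coefficientwise nonnegative. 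Your route buys a self-contained proof with no Hilbert-depth machinery, at the cost of the infinite-field reduction, which you handle correctly; Shen's route is shorter given \cite{BKU}, which is exactly the point the paper makes (``it is important to have the right tool''). Your side checks also hold: the Stanley--Reisner computation $H_{I/J}(s)=\sum_j \rho_j\, s^j/(1-s)^j$ is the standard face-support count applied to $\Delta_J\setminus\Delta_I$, and $\gamma_{t+1}=\rho_{t+1}-\alpha_t$ does recover Theorem \ref{t} as the case $k=t+1$.
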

 Shen's proof is very short, based on a strong tool, namely the Hilbert depth considered by Bruns-Krattenhaler-Uliczka \cite{BKU} (see also \cite{U}, \cite{IM}). Thus it is important to have the right tool.

Let $r$ be the number of the squarefree monomials of degrees $d$ of $I$ and $B$ (resp. $C$) be the set of the squarefree monomials of degrees $d+1$  (resp. $d+2$) of $I\setminus J$.  Set $s=|B|$, $q=|C|$.
If $r>s$ then Theorem \ref{t}  says that $\depth_SI/J=d$, namely the minimum possible. This was  done previously in \cite{P} (the idea started  in \cite{P0}).
Moreover, Theorem \ref{t} together with Hall's marriage theorem for bipartite  graphs gives  the following:

\begin{Theorem} (Popescu \cite[Theorem 4.3]{P})\label{P} If $\sdepth_SI/J=d$ then  $\depth_SI/J=d$, that is Stanley's Conjecture holds in this case.
\end{Theorem}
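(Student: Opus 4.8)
The plan is to translate the Stanley-depth hypothesis into a matching condition via the interval-partition description recalled in the introduction, and then feed the resulting numerical imbalance into Theorem \ref{t}. Write $A_d$ for the set of the $r$ squarefree monomials of degree $d$ in $I$; since $J$ is generated in degrees $\ge d+1$, these are exactly the minimal elements of $P_{I\setminus J}$, and $B$ (with $|B|=s$) is the set of squarefree monomials of degree $d+1$ in $I\setminus J$. Consider the bipartite graph $G$ on the vertex set $A_d\cup B$ with an edge between $u$ and $v$ whenever $u\mid v$. First I would establish
\[
\sdepth_S I/J \ge d+1 \iff G \text{ has a matching saturating } A_d .
\]
For $(\Rightarrow)$, take a partition $\mathcal P$ with $\sdepth\mathcal P \ge d+1$; each $u\in A_d$, being minimal, is the bottom of its own interval $[u,v_u]$ with $\deg v_u\ge d+1$, so some $ux_j$ of degree $d+1$ lies in $[u,v_u]\subseteq P_{I\setminus J}$, and $u\mapsto ux_j$ is injective because the intervals are pairwise disjoint. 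For $(\Leftarrow)$, a saturating matching $\phi$ yields the partition consisting of the two-element intervals $[u,\phi(u)]$ together with singletons on the remaining monomials, all of whose tops have degree $\ge d+1$.

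Now assume $\sdepth_S I/J = d$. Then $G$ has no matching saturating $A_d$, so by Hall's marriage theorem there is a subset $A\subseteq A_d$ whose neighbourhood $N(A)\subseteq B$ satisfies $|N(A)| < |A|$. Put $I_A=(A)$ and $J_A = J\cap I_A$. Then $I_A$ is generated in degree $d$ and $J_A$ in degrees $\ge d+1$, so $\depth_S I_A/J_A\ge d$ by \cite{HVZ}, \cite{P}; moreover the squarefree monomials of degree $d$ in $I_A\setminus J_A$ are exactly $A$ and those of degree $d+1$ are exactly $N(A)$, whence $\rho_{d+1}(I_A\setminus J_A)=|N(A)| < |A| = \rho_d(I_A\setminus J_A)$. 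Applying Theorem \ref{t} with $t=d$ to the pair $(I_A,J_A)$ gives $\depth_S I_A/J_A = d$, that is, $\Coh{d}{I_A/J_A}\ne 0$.

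It remains to transfer this to $I/J$. The inclusion $I_A\hookrightarrow I$ carries $J_A$ into $J$ and induces an injection $I_A/J_A \hookrightarrow I/J$ with cokernel $Q = I/(I_A+J)$, giving the long exact sequence
\[
\cdots \to \Coh{d-1}{Q} \to \Coh{d}{I_A/J_A} \to \Coh{d}{I/J} \to \cdots .
\]
Since $\depth_S I/J\ge d$ in any case, it suffices to produce one nonzero class in $\Coh{d}{I/J}$, and the task reduces to showing that the nonzero module $\Coh{d}{I_A/J_A}$ is not entirely absorbed by the map coming from $\Coh{d-1}{Q}$. The ideal $I_A+J$ is again squarefree and generated in degrees $\ge d$, so $\depth_S(I_A+J)\ge d$ and hence $\depth_S Q\ge d-1$; the crux is to improve this to $\Coh{d-1}{Q}=0$, or, more realistically, to exhibit a single squarefree multidegree $\mathbf a$ with $\Coh{d}{I_A/J_A}_{\mathbf a}\ne 0$ and $\Coh{d-1}{Q}_{\mathbf a}=0$, using that multigraded local cohomology at $\mathbf a$ only involves the variables in $\supp(\mathbf a)$, which may be chosen among those occurring in $A\cup N(A)$. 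Establishing this local vanishing, so that a class of $\Coh{d}{I_A/J_A}$ survives in $\Coh{d}{I/J}$, is the step I expect to be the real obstacle; once it is in place we obtain $\depth_S I/J\le d$ and therefore $\depth_S I/J=d$.
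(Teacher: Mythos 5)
Your route is precisely the one the paper indicates for this statement (it is imported from \cite[Theorem 4.3]{P}, with the remark that it follows from Theorem \ref{t} together with Hall's marriage theorem): the equivalence between $\sdepth_S I/J\geq d+1$ and a matching of $G$ saturating $A_d$ is correct (one harmless slip: if $I$ has minimal generators of degree $>d$, the elements of $A_d$ are minimal in $P_{I\setminus J}$ but need not be \emph{all} minimal elements; what your argument actually uses is only that $A_d$ is exactly the set of squarefree monomials of degree $d$ in $I\setminus J$, which is true). Hall's theorem then produces a nonempty $A$ with $|N(A)|<|A|$, and your application of Theorem \ref{t} with $t=d$ to $I_A/J_A$ is legitimate: $\depth_S I_A/J_A\geq d$, $\rho_d(I_A\setminus J_A)=|A|$, $\rho_{d+1}(I_A\setminus J_A)=|N(A)|$, so $\depth_S I_A/J_A=d$ and $\Coh{d}{I_A/J_A}\neq 0$. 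But, as you yourself concede, the proof is not finished: you leave open the vanishing needed for a class of $\Coh{d}{I_A/J_A}$ to survive in $\Coh{d}{I/J}$, and you only sketch a speculative multidegree-by-multidegree analysis. As written the argument has a genuine (and admitted) gap at this transfer step.

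The gap is, however, easy to close, and by a global statement rather than the local vanishing you expect to be ``the real obstacle''. If $I=I_A+J$ then $I/J\cong I_A/J_A$ and there is nothing to transfer; otherwise let $I''$ be the ideal generated by the minimal monomial generators of $I$ not in $A$, so that $I=I_A+I''$ and $Q=I/(I_A+J)\cong I''/\bigl(I''\cap (I_A+J)\bigr)$. Here $I''$ is generated by squarefree monomials of degrees $\geq d$, while $I''\cap(I_A+J)$ is generated by squarefree monomials of degrees $\geq d+1$: under the standing reduction that $J$ is generated in degrees $\geq d+1$ (or $J=0$), the only squarefree monomials of degree $d$ in $I_A+J$ are the elements of $A$, and none of these lies in $I''$. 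Hence the same facts you quoted at the outset (\cite[Proposition 3.1]{HVZ} together with \cite[Lemma 1.1]{P}, as recalled in the Introduction) give $\depth_S Q\geq d$, i.e.\ $\Coh{d-1}{Q}=0$, and your long exact sequence immediately yields $\Coh{d}{I/J}\neq 0$, so $\depth_S I/J=d$. Your weaker bound $\depth_S Q\geq d-1$ arose because you estimated $Q$ from the sequence $0\to I_A+J\to I\to Q\to 0$; presenting $Q$ as a factor of two squarefree monomial ideals of the standard shape recovers the missing $1$ for free, and no multigraded local computation is needed.
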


The purpose of our paper is to study the next step in proving Stanley's Conjecture namely the following weaker conjecture.
\begin{Conjecture} \label{c}   Suppose that $I \subset S$ is minimally generated by some squarefree monomials $f_1,\ldots,f_k$ of degrees $d$,  and a  set $H$  of squarefree monomials of degrees $\geq d+1$. Assume that  $\sdepth_S I/J=d+1$.
Then $\depth_S I/J \leq d+1$
\end{Conjecture}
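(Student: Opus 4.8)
The plan is to argue by contradiction: assuming $\sdepth_S I/J=d+1$ but $\depth_S I/J\geq d+2$, I will construct a partition of the poset $P_{I\setminus J}$ into intervals whose tops all have degree $\geq d+2$, which forces $\sdepth_S I/J\geq d+2$ and contradicts the hypothesis. Write $r=\rho_d(I\setminus J)$, $s=\rho_{d+1}(I\setminus J)$, $q=\rho_{d+2}(I\setminus J)$. Since the only generators of $I$ of degree $\leq d$ are $f_1,\dots,f_k$ and $J$ lives in degrees $\geq d+1$, every degree-$d$ monomial of $I$ is one of the $f_i$ and none lies in $J$; hence the degree-$d$ part of $I\setminus J$ is exactly $\{f_1,\dots,f_k\}$ and $r=k$. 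We may assume $d+2\leq n$, since otherwise $\depth_S I/J\leq n\leq d+1$ and there is nothing to prove; in particular $t:=d+1<n$, so Shen's Theorem \ref{t1} is applicable. Taking $t=d+1$ there, its two relevant hypotheses are $s<2r$ and $q<s-r$; either would force $\depth_S I/J=d+1$, contradicting $\depth_S I/J\geq d+2$. Hence $s\geq 2r$ and $q\geq s-r$, and it suffices to produce the desired partition under these two inequalities.

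In this regime I would realize the partition explicitly with thin intervals at the bottom and singletons above. For each $a\in\{f_1,\dots,f_k\}$ I want two variables $x_i,x_j\notin\supp(a)$ with $ax_i,ax_j\in B$ and $ax_ix_j\in C$, giving the interval $[a,ax_ix_j]=\{a,ax_i,ax_j,ax_ix_j\}$; for each element $b\in B$ not yet used I want $x_l$ with $bx_l\in C$, giving $[b,bx_l]$; finally every still unused monomial of degree $\geq d+2$ becomes its own singleton interval. The degree-$(d+1)$ generators coming from $H$, being minimal and lying above no $f_i$, are simply absorbed among the leftover $b\in B$, and all monomials of degree $\geq d+3$ are singletons, so a nonempty $H$ creates no counting obstruction. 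The decisive feature is that the counting balance of this scheme is governed \emph{exactly} by the two inequalities already derived: the $r$ bottom intervals consume $2r$ elements of $B$ and $r$ of $C$, the $s-2r$ leftover intervals consume $s-2r$ further elements of $C$, for a total of $2r\leq s$ elements of $B$ and $s-r\leq q$ elements of $C$.

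The heart of the matter is therefore to realize these numerically balanced choices by \emph{disjoint} intervals, i.e. to verify a Hall-type condition rather than a mere count. I would encode it as an integral flow, or as a matching in the bipartite divisibility graph on $A\cup B$ subject to the compatibility that the two chosen up-neighbors $ax_i,ax_j$ of each $a$ possess the common cover $ax_ix_j\in C$, together with a second matching of the leftover part of $B$ into $C$. By Hall's marriage theorem the construction succeeds unless some subfamily $A'\subseteq A$ has too few up-neighbors in degree $d+1$, or some $B'\subseteq B$ has too few covers in degree $d+2$; the global bounds $s\geq 2r$ and $q\geq s-r$ exclude a global deficiency but not a local bottleneck. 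This is exactly where the squarefree structure and the location of $J$ in degrees $\geq d+1$ must be exploited, and it is the main obstacle: one has to show that the degree-$(d+1)$ and degree-$(d+2)$ up-sets of any subfamily of the $f_i$ are large enough relative to the subfamily. For $k=3$ the up-sets of three squarefree monomials are combinatorially so restricted that this deficiency analysis can be carried out directly, which is what underlies the theorem announced in the abstract; but for arbitrary $k$ and nonempty $H$, ruling out such local bottlenecks is precisely the difficulty that keeps Conjecture \ref{c} open.
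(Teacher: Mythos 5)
You have not proved the statement, and your final sentence concedes as much: the statement is Conjecture \ref{c}, which the paper itself leaves open, proving it only for $k=1$ and for $1<k\leq 3$ with $H=\emptyset$ (Theorem 1.5, via Proposition \ref{l1} and Theorems \ref{p2}, \ref{p3}). The concrete gap in your plan is the step you label ``the heart of the matter.'' After invoking Shen's Theorem \ref{t1} with $t=d+1$ you correctly extract the numerical consequences $s\geq 2r$ and $q\geq s-r$ of $\depth_S I/J\geq d+2$, but from that point on your argument uses nothing else from the depth hypothesis; if it succeeded, it would show that these two counting inequalities alone force $\sdepth_S I/J\geq d+2$, which is false. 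The paper's own Lemma \ref{key} and Proposition \ref{l1} exhibit exactly why: when $C\subset (w_{12},w_{13},w_{23})$, whether the module has sdepth $d+1$ is governed by the positions of the least common multiples $w_{ij}$ in $B$, $C$, or $J$, not by the counts $r,s,q$, and the counts can be balanced while sdepth remains $d+1$. Moreover your bottom intervals $[a,ax_ix_j]$ impose that the two chosen neighbors $ax_i,ax_j$ admit a common cover $ax_ix_j\notin J$; this is a $3$-uniform hypergraph matching constraint which bipartite Hall does not resolve. In the one solved analogue (Theorem \ref{P}: $\sdepth=d$ implies $\depth=d$), a failing Hall condition is rescued by restricting to a subquotient where the generator count exceeds $s$ and Theorem \ref{t} applies; you offer no analogous rescue one degree up, and constructing one is precisely the open content of the conjecture.

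A secondary inaccuracy: you suggest that the paper's $k\leq 3$ results come from carrying out your deficiency analysis directly on the up-sets of three monomials. They do not. The proved cases proceed by a quite different route: short exact sequences of the form $0\to I'/J'\to I/J\to I/(I'+J)\to 0$, Rauf's sdepth inequality \cite[Lemma 2.2]{R}, the Depth Lemma, induction on $|E|$, and the path/bad-path surgery on partitions in Lemma \ref{ml}, with the counting criteria of Theorems \ref{t} and \ref{t1} used only as finishing tools in configurations (classified by the $w_{ij}$) where $s$ provably exceeds $q+3$ or similar bounds. So even as a strategy sketch for the known special cases, your matching scheme is not the paper's method, and for general $k$ and $H\neq\emptyset$ it leaves the decisive step unproven.
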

The following  theorem is a partial answer.
\begin{Theorem}   The above conjecture holds  in each of the following two cases:
\begin{enumerate}
\item{} $k= 1$,
\item{} $1<k\leq 3$, $H=\emptyset$.
\end{enumerate}
\end{Theorem}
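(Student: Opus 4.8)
The plan is to first dispose of everything that the Hilbert-series criteria already control and to isolate the one genuinely combinatorial case. Since $\depth_S I/J\ge d$ always, I may assume $\depth_S I/J\ge d+1$ (otherwise the depth is $d\le d+1$ and we are done). Writing $r=\rho_d(I\setminus J)$, $s=\rho_{d+1}(I\setminus J)$ and $q=\rho_{d+2}(I\setminus J)$, I would apply Shen's Theorem \ref{t1} with $t=d+1$. Its instance $k=d+1$ gives $s<2r\Rightarrow\depth_S I/J=d+1$, and its instance $k=d+2$ (equivalently Popescu's Theorem \ref{t}) gives $q<s-r\Rightarrow\depth_S I/J=d+1$. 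Both conclusions are $\le d+1$, so after this reduction I may assume the balanced regime $s\ge 2r$ and $q\ge s-r$. In that regime the criteria of Theorems \ref{t} and \ref{t1} are exhausted: no inequality among the $\rho_j$ can see the drop from $d+2$ to $d+1$, so the hypothesis $\sdepth_S I/J=d+1$ must be exploited combinatorially, in the spirit of the matching argument behind Theorem \ref{P}.

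The combinatorial meaning of $\sdepth_S I/J=d+1$ is that the poset $P_{I\setminus J}$ admits \emph{no} partition into intervals whose tops all have degree $\ge d+2$. I would model the attempt to build such a partition as a matching/transversal problem: every squarefree monomial of degree $d$ or $d+1$ in $I\setminus J$ must be absorbed into an interval reaching degree $\ge d+2$, and in the balanced regime the relevant counts admit a system of distinct representatives unless Hall's condition fails. Hence there is a nonempty set $W$ of monomials of degree $\le d+1$ whose upward neighborhood in degrees $\ge d+2$ is deficient. The goal is to turn this deficiency into the nonvanishing of a low local cohomology module, namely $\Coh{j}{I/J}\ne 0$ for some $j\le d+1$, which by Grothendieck's characterization $\depth_S I/J=\min\{j:\Coh{j}{I/J}\ne 0\}$ is exactly the desired bound $\depth_S I/J\le d+1$. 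The concrete tool I would use is Takayama's formula (or directly the Koszul homology already used in the proof of Theorem \ref{t}), expressing each multigraded piece $\Coh{j}{I/J}_{\ab}$ through reduced simplicial homology of the degree complexes of $I/J$; the Hall-deficient set $W$ should single out a multidegree $\ab$ carrying a nonzero class in cohomological degree $\le d+1$.

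The two generator hypotheses serve precisely to make this final step finite and explicit. For $k=1$ there is a single degree-$d$ generator $f_1$, and I would split $I/J$ by the short exact sequence
\[
0\To S/(J:f_1)\To I/J\To I/((f_1)+J)\To 0 ,
\]
whose left term is a cyclic squarefree module (with $(J:f_1)$ generated in degrees $\ge 1$) and whose right term is generated in degrees $\ge d+1$; combining the Depth Lemma with induction on $n$ and on $|H|$ reduces the claim to the cyclic case and to a higher-degree instance already covered. For $1<k\le 3$ with $H=\emptyset$ the module $I/J$ has at most three degree-$d$ generators, so the graph carrying the matching of the previous paragraph has at most three critical sources; Hall's condition can fail only along finitely many combinatorial patterns, and for each I would either read off a genuine numerical deficiency (relaunching Theorem \ref{t}) or write down the multidegree $\ab$ witnessing $\Coh{d+1}{I/J}\ne 0$.

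The main obstacle is the balanced case $s\ge 2r$, $q\ge s-r$ itself: there every Hilbert-depth test is silent, and depth must be manufactured from the failure of a matching. The delicate point is to guarantee that the Hall-deficient set $W$ really produces a nonzero simplicial homology class in the correct cohomological degree $\le d+1$ — not merely an obstruction to one particular partition — and to check that this bookkeeping survives the inductive passage to $n-1$ variables without destroying squarefreeness or the degree bounds on $I$ and $J$. It is exactly here that the restriction $k\le 3$ (or $k=1$) is indispensable, since it caps the complexity of $W$ at a short, by-hand case list.
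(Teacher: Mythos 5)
Your numerical reduction is sound as far as it goes: with $t=d+1$, Shen's Theorem~\ref{t1} at $k=d+1$ gives $s<2r\Rightarrow\depth_SI/J=d+1$ and at $k=d+2$ (Popescu's Theorem~\ref{t}) gives $q<s-r\Rightarrow\depth_SI/J=d+1$, and the paper does use exactly these criteria (in the forms $q<r$ and $s>q+r$, and later $s>q+3$) to kill many subcases. But your central step --- converting the hypothesis $\sdepth_SI/J=d+1$ in the ``balanced regime'' into a Hall-deficient set $W$ and then into a nonzero class in $\Coh{j}{I/J}$ for some $j\le d+1$ via Takayama's formula --- is not an argument but a restatement of the problem, and it rests on a false model. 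The equivalence ``$\sdepth\ge d+2$ fails iff a bipartite matching fails'' is what powers Theorem~\ref{P} at level $d\to d+1$, where intervals of a partition with tops of degree $\ge d+1$ do induce an injection from degree-$d$ into degree-$(d+1)$ monomials. At level $d+2$ this breaks: an interval $[u,v]$ with $\deg u=d$, $\deg v=d+2$ consumes \emph{intermediate} degree-$(d+1)$ monomials as well, and the disjointness of intervals across two degree steps makes the obstruction a covering/flow condition on a three-level poset, not a bipartite Hall condition. No implication ``Hall deficiency $\Rightarrow\Coh{d+1}{I/J}\neq 0$'' is established (or known) at this level; supplying one would essentially prove the conjecture outright, for all $k$, which is why your claim that $k\le 3$ merely ``caps the case list'' does not hold up --- nothing in your scheme actually uses $k\le 3$.

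The paper's route is quite different and avoids local cohomology entirely. For $k=1$ with $H\neq\emptyset$ (Theorem~\ref{p2}) it inducts on $|E|$ and proves the key Lemma~\ref{ml}: given a partition ${\mathcal P}_b$ of sdepth $d+2$ on $I_b/J_b$ (with one element $b\in B\cap(f)$ removed), it extracts an injection $h\colon B\setminus\{b\}\to C$ and runs a ``maximal bad path'' argument, repeatedly rewiring intervals to either manufacture a partition of $I/J$ of sdepth $d+2$ (contradiction) or exhibit a proper subideal $I'\subsetneq I$ with $\sdepth_SI'/J'\le d+1$ and $\depth_SI/(J,I')\ge d+1$, whence the Depth Lemma applies. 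Note also that your $k=1$ exact sequence splits off $S/(J:f_1)$ on the wrong side: to transfer $\sdepth\le d+1$ to the subterm one needs the \emph{quotient} to have sdepth $\ge d+2$ (Rauf's lemma), which the paper engineers by deleting specific monomials $f_1x_t,f_1x_p$ dividing some $c\in C$ from the generators (Lemma~\ref{l}), not by the naive split. For $k=2,3$ the paper reduces via the lcms $w_{ij}$: either $C\not\subset(w_{12},w_{13},w_{23})$, handled by Lemma~\ref{l2} through an extensive partition-rewiring case analysis (Cases 1--6), or $C\subset(w_{12},w_{13},w_{23})$, handled in Lemma~\ref{key} by refined counting of the sets $B_{ij}$ of divisors of $C_{ij}=C\cap(w_{ij})$ to force $s>q+3$ and re-enter Theorem~\ref{t}. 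So the genuine gap in your proposal is the missing bridge from a partition obstruction to a depth bound; the paper crosses it by induction on generators together with exact sequences and sharpened counting, not by a direct homological witness.
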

When $k=1$ and $s\not =q+1$ the result was stated in \cite{PZ} and \cite{PZ1}. The theorem follows from Proposition \ref{l1}  and Theorems \ref{p2}, \ref{p3}.

We owe thanks to the Referee, who noticed  some  mistakes in a previous version of this paper, especially in the proof of Lemma \ref{key}.

\section{Cases $r=1$ and $d=1$}

Let $I\supsetneq J$ be  two  squarefree monomial ideals of $S$. We assume that $I$ is generated by squarefree monomials of degrees $\geq d$  for some $d\in {\bf N}$. We may suppose that either $J=0$, or  is generated by some squarefree monomials of degrees $\geq d+1$.
As above $B$ (resp. $C$) denotes the set of the squarefree monomials of degrees $d+1$  (resp. $d+2$) of $I\setminus J$.

\begin{Lemma} \label{l} Suppose that $I \subset S$ is minimally generated by some  square free monomials $\{f_1,\ldots,f_r\}$ of degrees $d$,
  and a  set $E$  of square free monomials of degrees $\geq d+1$.
  Assume that  $\sdepth_S I/J\leq d+1$ and the above Conjecture \ref{c}  holds for $k<r$ and for $k=r$, $|H|<|E|$ if $E\not=\emptyset$. If either $C\not\subset (f_2,\ldots,f_r,E)$, or
  $C\not\subset (f_1,\ldots,f_r,E\setminus \{a\})$ for some $a\in E$
  then $\depth_S I/J\leq d+1$.
\end{Lemma}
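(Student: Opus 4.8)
The plan is to argue by contradiction, peeling off the generator singled out by the hypothesis with a single short exact sequence and reducing to a smaller instance of Conjecture \ref{c} via the assumed induction. Since $\depth_S I/J\geq d$ always and, by Theorem \ref{P}, $\sdepth_S I/J=d$ already forces $\depth_S I/J=d\leq d+1$, I may assume $\sdepth_S I/J=d+1$ and suppose, for contradiction, that $\depth_S I/J\geq d+2$. I treat the first case $C\not\subset(f_2,\dots,f_r,E)$; the second is symmetric, removing $a\in E$ instead of $f_1$. First I would choose a squarefree $w\in C$ of degree $d+2$ with $w\notin(f_2,\dots,f_r,E)$; then $w$ is divisible by $f_1$ but by none of $f_2,\dots,f_r$ nor any element of $E$, say $w=f_1x_ix_j$ with $x_i,x_j\notin\supp f_1$. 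Put $I'=(f_2,\dots,f_r,E)$ and $J'=I'\cap J$, so that $(I'+J)/J\cong I'/J'$, and consider
\[
0 \To I'/J' \To I/J \To T \To 0, \qquad T:=I/(I'+J).
\]

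The engine of the proof is the estimate $\depth_S T\geq d+1$. As a module, $T\cong (S/Q)(-d)$ with $Q=(I'+J):f_1$. Every generator of $Q$ has the form $g/\gcd(g,f_1)$ for a generator $g$ of $I'+J$, hence is supported on the variables outside $\supp f_1$; writing $S'=K[x_k:x_k\notin\supp f_1]$ we get $Q\subset S'$, so the variables of $f_1$ form a regular sequence on $S/Q$ and $\depth_S S/Q=d+\depth_{S'}S'/Q$. Now $Q$ is a squarefree (radical) ideal, so $\Ass(S'/Q)=\Min(Q)$ consists of monomial primes; the witness $w$ gives $x_ix_j\notin Q$, whence $x_i\notin Q$, so $Q$ is not the graded maximal ideal of $S'$ and that maximal ideal is not associated to $S'/Q$. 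Therefore $\depth_{S'}S'/Q\geq 1$ and $\depth_S T\geq d+1$. (In the second case the same computation with $a$ in place of $f_1$ again gives $\depth_S T\geq d+1$, here using $\deg a\in\{d+1,d+2\}$.)

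With this in hand the depth lemma closes the homological side: applying $\depth_S(I'/J')\geq\min\{\depth_S I/J,\ \depth_S T+1\}$ to the sequence together with $\depth_S I/J\geq d+2$ and $\depth_S T\geq d+1$ yields $\depth_S I'/J'\geq d+2$. But $I'/J'$ has only $r-1$ generators of degree $d$ (together with $E$), so Conjecture \ref{c} holds for it by hypothesis; since $\depth_S I'/J'\geq d+2$, its contrapositive, with Theorem \ref{P} excluding $\sdepth=d$, forces $\sdepth_S I'/J'\geq d+2$. In the second case one instead invokes Conjecture \ref{c} for $k=r$, $|H|=|E|-1<|E|$ to draw the same conclusion.

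The hard part will be converting $\sdepth_S I'/J'\geq d+2$ into a contradiction with $\sdepth_S I/J=d+1$. What I want is a Stanley-depth analogue of the depth lemma for the sequence above, namely $\sdepth_S I/J\geq\min\{\sdepth_S I'/J',\ \sdepth_S T\}$, paired with a lower bound $\sdepth_S T\geq d+2$. The latter is the delicate point, since the passage from $\depth_{S'}S'/Q\geq 1$ to $\sdepth_{S'}S'/Q\geq 2$ is not automatic, and one must exploit the precise combinatorics of $Q$ (and the optimal interval partition realizing $\sdepth_S I/J=d+1$) rather than $w$ alone. Moreover the naive attempt to glue an interval partition of $P_{I'\setminus J}$ to one of $P_{T}$ fails, because an interval based at an $f_1$-only monomial can absorb elements that become divisible by some $f_j$ higher up, producing overlaps; so the partition surgery has to be carried out with care. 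This is the step I expect to carry the real weight of the argument.
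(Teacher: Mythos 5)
There is a genuine gap, and it sits exactly where you flagged it. With your choice $I'=(f_2,\ldots,f_r,E)$ the cokernel $T=I/(I'+J)$ is too big: besides $f_1$ it contains every monomial $f_1x_k\in B$ not lying in $I'+J$, and nothing forces such a monomial to have a degree-$(d+2)$ multiple surviving in $T$ (all the products $f_1x_kx_l$ may lie in $J$). In that situation any interval partition of $T$ must use the singleton interval $[f_1x_k,f_1x_k]$, so $\sdepth_S T=d+1$ and the bound $\sdepth_S T\geq d+2$ that your argument needs is simply false in general. The paper's proof differs at precisely this point: it takes $I'=(f_2,\ldots,f_r,E,B\setminus\{f_1x_t,f_1x_p\})$, where $c=f_1x_tx_p$ is the witness in $C\setminus(f_2,\ldots,f_r,E)$, i.e., it absorbs into $I'$ \emph{all} of $B$ except the two divisors of $c$. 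Then the only monomials of degree $\leq d+1$ in $T\cong (f_1)/(J+I')\cap(f_1)$ are $f_1,f_1x_t,f_1x_p$, all contained in the interval $[f_1,c]$ with $c\notin J+I'$, so $\sdepth_S T\geq d+2$ becomes immediate; and $I'$ is still generated by $r-1$ monomials of degree $d$ together with monomials of degree $\geq d+1$, so the inductive hypothesis (Conjecture \ref{c} for $k<r$) still applies to $I'/J'$.

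Two further remarks. First, the ``Stanley-depth analogue of the depth lemma'' you wished for is exactly \cite[Lemma 2.2]{R}, which the paper cites, so your worry about partition surgery and overlapping intervals is moot; the real obstruction was only the sdepth of $T$. Second, once $T$ is arranged to have sdepth $\geq d+2$, your contradiction detour is unnecessary and the argument runs forward, as in the paper: $\sdepth_S I/J\leq d+1$ together with \cite[Lemma 2.2]{R} forces $\sdepth_S I'/J'\leq d+1$; the assumed conjecture for $k=r-1$ (with Theorem \ref{P} handling the possibility $\sdepth_S I'/J'=d$) gives $\depth_S I'/J'\leq d+1$; and the Depth Lemma then needs only $\depth_S T\geq d+1$, which is the part you proved correctly --- your colon-ideal computation with the regular sequence $\supp f_1$ and the witness $x_ix_j\notin Q$ is exactly the paper's parenthetical remark that depth $\geq d+1$ ``is easier to see.'' Your second case (removing $a\in E$) is essentially sound as set up, since there $a$ is the unique bottom element of the quotient and the interval $[a,c]$ gives $\sdepth\geq d+2$ for free --- and indeed the paper does not enlarge the ideal in that case --- but the first case as written fails, and that is the case carrying the lemma.
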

\begin{proof}
Let $c\in (C\setminus (f_2,\ldots,f_r,E))$. Then $c\in (f_1)$, let us say $c = f_1x_tx_p$. Set $I'=(f_2,\ldots,f_r,E, B\setminus\{f_1x_t,f_1x_p\})$,
$J'=I'\cap J$.  In the following exact sequence
$$0\to I'/J'\to I/J\to I/(J+I')\to 0$$
the last term is isomorphic with $(f_1)/(J+I')\cap (f_1)$ and has depth and  sdepth $\geq d+2$ because $c\not\in (J+I')$ (here it is enough that depth $\geq d+1$,
 which is easier to see). By \cite[Lemma 2.2]{R} we get $\sdepth_S I'/J'\leq d+1$. It follows that
$\depth_S I'/J'\leq d+1$ by hypothesis  and so the Depth Lemma gives  $\depth_S I/J\leq d+1$.

Now, let $ I''=(f_1,\ldots,f_r,E\setminus \{a\})$ for some $a\in E$ and $c\in C\setminus I''$. In the following exact sequence
$$0\to I''/I''\cap J\to I/J\to I/(J+I'')\to 0$$
the last term is isomorphic with $(a)/(a)\cap (J+I'')$ and has depth and sdepth $\geq d+2$ because $c\not \in J+I''$ and as above we get  $\depth_S I/J\leq d+1$.
\end{proof}

The following lemma could be seen somehow as a consequence of \cite[Theorem 1.10]{PZ},  but we give here an easy direct proof.
\begin{Lemma} \label{1} Suppose that $r=1$, let us say $I=(f)$ and $E=\emptyset$. If  $\sdepth_SI/J=d+1$, $d=\deg f$ then  $\depth_SI/J\leq d+1$.
\end{Lemma}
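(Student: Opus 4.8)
The plan is to reduce $I/J$ to a Stanley--Reisner ring and then read off both invariants from a single simplicial complex. After relabelling variables I may assume $f=x_1\cdots x_d$; write $W=\{x_{d+1},\dots,x_n\}$. Multiplication by $f$ identifies $I/J$ with $S/(J:f)$ up to the multidegree shift by $f$, so $\depth_S I/J=\depth_S S/(J:f)$ and $\sdepth_S I/J=\sdepth_S S/(J:f)$. Since $f$ and the generators of $J$ are squarefree, $(J:f)$ is again a squarefree monomial ideal, and its minimal generators involve only the variables of $W$: if $w$ were a minimal generator with $x_i\mid w$ for some $i\leq d$, then setting $w'=w/x_i$ and using $x_i\mid f$ gives $\supp(fw)=\supp(fw')$, hence $fw\in J\iff fw'\in J$ (membership in a squarefree ideal depends only on the support), so $w'\in(J:f)$, contradicting minimality. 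Thus $(J:f)$ is the Stanley--Reisner ideal of a simplicial complex $\Delta$ on the vertex set $W$, and since $f\notin J$ (as $(f)=I\supsetneq J$) we have $\emptyset\in\Delta$. The variables $x_1,\dots,x_d$ form a regular sequence on $S/(J:f)=K[x_1,\dots,x_d]\otimes_K K[\Delta]$, so $\depth_S I/J=d+\depth K[\Delta]$. Combinatorially, sending the squarefree monomial $f\prod_{x\in T}x$ (for $T\in\Delta$) to $T$ is an isomorphism between $P_{I\setminus J}$ and the face poset of $\Delta$ under which a face $T$ corresponds to a monomial of degree $d+|T|$; hence, in the partition description of $\sdepth$ recalled in the Introduction, $\sdepth_S I/J=d+\sigma$, where $\sigma$ is the largest $m$ for which the faces of $\Delta$ admit a partition into intervals all of whose top elements have size $\geq m$.

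With this dictionary the claim becomes: if $\sigma=1$ then $\depth K[\Delta]\leq 1$, and I would prove the contrapositive, that $\depth K[\Delta]\geq 2$ forces $\sigma\geq 2$. Assume $\depth K[\Delta]\geq 2$. Then $\dim K[\Delta]\geq 2$, so $\Delta$ has an edge and at least two vertices; moreover $\Delta$ is connected, since a disconnected complex (or one with an isolated vertex) has $H^1_{\mm}(K[\Delta])\neq 0$ by the Reisner/Hochster description of local cohomology, which would force $\depth K[\Delta]\leq 1$. Thus the $1$-skeleton $G$ of $\Delta$ is a connected graph on $\geq 2$ vertices.

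It then remains to build, from a spanning tree $T$ of $G$, a partition of the faces of $\Delta$ into intervals whose tops all have size $\geq 2$. Root $T$ at a vertex $\rho$ and fix a child $c$ of $\rho$. Take the interval $[\emptyset,\{\rho,c\}]$, which covers $\emptyset,\{\rho\},\{c\},\{\rho,c\}$; for every other vertex $w$ (so $w\neq\rho,c$) take $[\{w\},\{w,p(w)\}]$, where $p(w)$ is the parent of $w$ in $T$. These intervals are pairwise disjoint, their tops are the distinct tree edges (each of size $2$), and together they cover the empty face, every vertex, and every tree edge. Every remaining face of $\Delta$ has cardinality $\geq 2$ (it is a non-tree edge or a face of dimension $\geq 2$), so I add each of them as a singleton interval $[F,F]$, again with top of size $\geq 2$. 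This partitions all faces with every top of size $\geq 2$, so $\sigma\geq 2$, whence $\sdepth_S I/J=d+\sigma\geq d+2$, contradicting $\sdepth_S I/J=d+1$. Therefore $\depth K[\Delta]\leq 1$ and $\depth_S I/J\leq d+1$.

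The genuinely substantive step is the last one: converting the homological hypothesis $\depth\geq 2$ (equivalently, connectivity of the $1$-skeleton) into an explicit interval partition realizing $\sigma\geq 2$. The spanning-tree construction makes this clean and, usefully, uniform in the parameters $s=|B|$ and $q=|C|$, so it sidesteps the borderline case $s=q+1$ in which the counting bounds of Theorems \ref{t} and \ref{t1} give no information. The remaining points are routine: the squarefreeness of $(J:f)$ and the fact that its generators avoid $x_1,\dots,x_d$, which is exactly what makes the degree shift equal to $d$ for both invariants. Note that the argument uses only $\sdepth_S I/J\leq d+1$, consistent with the a priori bound $\depth_S I/J\geq d$; and the degenerate case $J=0$ is subsumed, since then $\Delta$ is a full simplex and the hypothesis forces $n-d\leq 1$.
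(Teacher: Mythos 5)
Your proof is correct, but it takes a genuinely different route from the paper's. The paper also begins with the identification $I/J\cong S/(J\colon f)$, but then reduces to $d=1$ by restricting variables via \cite[Lemma 3.6]{HVZ} (treating $d=0$ by adjoining an auxiliary variable), and argues by cases on $C$: when $C=\emptyset$ an annihilator computation bounds the depth directly, and when some $c\in C$ exists it forms the exact sequence $0\to (B\setminus\{x_1x_2,x_1x_3\})/\ldots\to I/J\to\ldots\to 0$, pushes $\sdepth\leq d+1$ to the first term by \cite[Lemma 2.2]{R}, and then crucially invokes Theorem \ref{P} (the $\sdepth=d$ case, which itself rests on the Koszul-homology bound of Theorem \ref{t} and Hall's marriage theorem) before finishing with the Depth Lemma. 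You instead translate the whole problem into the Stanley--Reisner complex $\Delta$ of $(J\colon f)$ --- correctly verifying the key normalization that the minimal generators of $(J\colon f)$ avoid $\supp f$, so that both invariants shift by exactly $d$ --- and prove the sharper contrapositive $\depth_S I/J\geq d+2\implies\sdepth_S I/J\geq d+2$: by Reisner/Hochster, $\depth K[\Delta]\geq 2$ forces the $1$-skeleton to be connected on at least two vertices, and your spanning-tree partition (the interval $[\emptyset,\{\rho,c\}]$, the intervals $[\{w\},\{w,p(w)\}]$ for the remaining vertices, and singletons $[F,F]$ for all faces of size $\geq 2$ not yet used) is indeed a disjoint cover of the face poset with all tops of size $\geq 2$, since in a rooted tree every edge has a unique child endpoint. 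What your route buys: it is self-contained modulo Reisner's criterion, bypasses the machinery behind Theorem \ref{P} and the exact-sequence/Depth-Lemma template entirely, treats $d=0$ uniformly without the paper's auxiliary-variable trick, and is insensitive to the counting parameters $s$ and $q$ (in particular the borderline $s=q+1$ where Theorems \ref{t} and \ref{t1} are silent); it even yields the slightly stronger implication $\sdepth_S I/J\leq d+1\implies\depth_S I/J\leq d+1$. What the paper's route buys: its exact-sequence technique is precisely the template reused in Lemmas \ref{l}, \ref{l2} and Theorem \ref{p2} for non-principal $I$, whereas your Stanley--Reisner dictionary exploits the special feature of the case $r=1$, $E=\emptyset$ that $P_{I\setminus J}$ is the full face poset of a single complex, and so does not generalize to the later sections.
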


 \begin{proof} First assume that $d>0$. Note that $I/J\cong S/(J:f)$. We have $\sdepth_SI/J=\sdepth_S S/(J:f)$ and $\depth_SI/J=\depth_S S/(J:f)$. It is enough to treat the case $d=1$. We may assume that $x_1|f$ and  using \cite[Lemma 3.6]{HVZ} after skipping the variables of $f/x_1$
  we may reduce our problem  to the case  $d=1$.

  Therefore we may assume that $d=1$.
  If $C=\emptyset$ then $x_1x_tx_k\in J$ for all $1<t<k\leq n$ and so $(J:x_1)$ contains all squarefree monomials of degree two in  $x_t$, $t>1$, that is the annihilator of the element induced by $x_1$ in $I/J$ has dimension $\leq 2$. It follows that $\depth_SI/J\leq 2$.

 If let us say $c=x_1x_2x_3\in C$ then in the exact sequence
 $$0\to (B\setminus \{x_1x_2,x_1x_3\})/J\cap (B\setminus \{x_1x_2,x_1x_3\})\to I/J\to I/J+(B\setminus \{x_1x_2,x_1x_3\})\to 0 $$
 the last term is isomorphic with $(x_1)/(J,(B\setminus \{x_1x_2,x_1x_3\})$ and it has depth $\geq 2$ and sdepth $3$ because it has just the interval $[x_1,c]$.
 The first term is not zero since otherwise $\sdepth_SI/J=3$, which is false. Then the first term has sdepth $\leq 2$ by \cite[Lemma 2.2]{R} and so it has depth
 $\leq 2$ by\cite[Theorem 4.3]{P}. Now it is enough to apply the Depth Lemma.

 Now assume that $d=0$, that is $I=S$. Set $S'=S[x_{n+1}]$, $I'=(x_{n+1})$, $J'=x_{n+1}J$. We have $ \sdepth_{S'}I'/J'=\sdepth_{S'}S'/JS'=1+\sdepth_SS/J=2$ using \cite[Proposition 3.6]{HVZ}. From above we get $\depth_{S'}I'/J'\leq 2$ and it follows  $\depth_SI/J\leq 1$.
 \end{proof}

The following theorem extends the above lemma and \cite{PZ1}, its  proof is given in the last section.
\begin{Theorem} \label{p2} Suppose that $I \subset S$ is minimally generated by a squarefree monomial $\{f\}$,  of degree $d$ and a set $E\not =\emptyset$ of monomials of degrees $d+1$.  Assume that  $\sdepth_S I/J\leq d+1$.
 Then $\depth_S I/J\leq d+1$.
\end{Theorem}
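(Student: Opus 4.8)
The plan is to strip off the single generator $f$ by a short exact sequence and to feed the resulting pieces back into Lemma \ref{l}, reserving a hands-on argument for one degenerate configuration. First I would note that if $\sdepth_S I/J=d$ then Theorem \ref{P} already gives $\depth_S I/J=d\le d+1$; so it suffices to prove that $\sdepth_S I/J=d+1$ implies $\depth_S I/J\le d+1$, and I would do this by induction on $|E|$, the case $E=\emptyset$ being Lemma \ref{1}.

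To invoke Lemma \ref{l} with $r=1$ I must know that Conjecture \ref{c} holds for the cases it calls upon. The case $k=1$ with fewer degree-$(d+1)$ generators is the induction hypothesis, and the case $k=0$, where the ambient ideal is generated in degrees $\ge d+1$, follows from Theorem \ref{P} after the shift $d\mapsto d+1$: any module $I'/J'$ with $I'$ generated in degrees $\ge d+1$ satisfies $\sdepth_S I'/J'\ge d+1$, because every top element $v_i$ of an interval in a partition of $P_{I'\setminus J'}$ has $\deg v_i\ge d+1$; hence $\sdepth_S I'/J'\le d+1$ forces equality and Theorem \ref{P} gives $\depth_S I'/J'=d+1$. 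With these hypotheses in hand, Lemma \ref{l} disposes of every configuration in which $C\not\subset (E)$, or $C\not\subset (f,E\setminus\{a\})$ for some $a\in E$.

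There remains the degenerate configuration $C\subset (E)$ and $C\subset (f,E\setminus\{a\})$ for all $a\in E$. Here I would use
$$0\to (E)/\bigl((E)\cap J\bigr)\to I/J\to I/\bigl(J+(E)\bigr)\to 0,$$
with $A=(E)/((E)\cap J)$ and $Q=I/(J+(E))\cong S/\bigl((J+(E)):f\bigr)$. The force of $C\subset (E)$ is that for any two variables $x_j,x_k$ not dividing $f$ the monomial $fx_jx_k$ lies in $J+(E)$, so $(J+(E)):f$ contains every squarefree quadric in the variables off $f$; consequently $\dim_S Q\le d+1$ and $\depth_S Q\le d+1$. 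Moreover $A$ is generated in degree $d+1$, so $\depth_S A\ge d+1$; since $A$ has no generator of degree $d$, Theorem \ref{P} (after the shift $d\mapsto d+1$) will give $\depth_S A=d+1$ as soon as $\sdepth_S A\le d+1$ is known.

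Granting $\depth_S A=d+1$ and $\depth_S Q=d+1$, the sharp form of the Depth Lemma finishes the proof: were $\depth_S I/J\ge d+2$, then $\depth_S I/J>\depth_S Q$ would force $\depth_S A=\depth_S Q+1=d+2$, contradicting $\depth_S A=d+1$; hence $\depth_S I/J\le d+1$. The main obstacle is precisely the verification of these two equalities in the degenerate case. Proving $\depth_S Q=d+1$ is a Cohen--Macaulayness assertion for the thin quotient $S/((J+(E)):f)$ that I expect to require the second degenerate condition $C\subset (f,E\setminus\{a\})$, and establishing $\sdepth_S A\le d+1$ is delicate because Stanley depth does not transfer upper bounds along the above sequence; this is the step that will demand the most care and where the special structure of the generators must be exploited.
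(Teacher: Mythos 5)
Your reductions coincide with the paper's own first steps: Theorem \ref{P} disposes of $\sdepth_S I/J=d$, the induction on $|E|$ starts from Lemma \ref{1}, and Lemma \ref{l} (with Theorem \ref{P} shifted by one degree for the case $k=0$ and the induction hypothesis for $k=1$, $|H|<|E|$) forces the degenerate configuration $C\subset ((f)\cap (E))\cup \bigl(\bigcup_{a\neq a'}(a)\cap (a')\bigr)$. But at that point your proposal has a genuine gap, which you partly flag yourself: nothing in the plan produces the bound $\sdepth_S A\leq d+1$ for $A=(E)/((E)\cap J)$, and no exact-sequence formalism can produce it. Precisely because $C\subset (E)$, the quotient $Q=I/(J+(E))$ contains no squarefree monomial of degree $d+2$, so $\sdepth_S Q\leq d+1$ holds automatically; hence Rauf's inequality $\sdepth_S I/J\geq \min\{\sdepth_S A,\sdepth_S Q\}$ from \cite[Lemma 2.2]{R} is vacuously consistent with $\sdepth_S A\geq d+2$, and if in fact $\depth_S A\geq d+2$ the sharp Depth Lemma allows $\depth_S I/J\geq d+2$ even when $\depth_S Q=d+1$ (it only forces $\depth_S A=\depth_S Q+1=d+2$, which is no contradiction). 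Your other needed equality, $\depth_S Q=d+1$, is likewise unproven: the quadric argument gives only $d\leq \depth_S Q\leq \dim_S Q\leq d+1$, and if $\depth_S Q=d$ the Depth Lemma step collapses regardless of what is known about $A$.

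The paper closes exactly this gap by a different mechanism. It first splits off the case $s>q+1$ via Theorem \ref{t} (so one may assume $s\leq q+1$), reduces to $|B\setminus E|\geq 2$, and then, for $b\in B\cap (f)$, considers $I_b=(B\setminus \{b\})$: if $\sdepth_S I_b/J_b\leq d+1$, Theorem \ref{P} (shifted) and the Depth Lemma finish; otherwise a partition ${\mathcal P}_b$ of sdepth $d+2$ yields an injection $h:B\setminus\{b\}\to C$, and the ``maximal bad path'' analysis of Lemma \ref{ml} either repairs ${\mathcal P}_b$ into an sdepth-$(d+2)$ partition of $I/J$, contradicting $\sdepth_S I/J=d+1$, or extracts a proper nonzero subideal $I'\subsetneq I$ generated by a subset of $\{f\}\cup B$ (not your rigid choice $(E)$) with $\sdepth_S I'/J'\leq d+1$ and $\depth_S I/(J,I')\geq d+1$, to which the induction on $|E|$ applies. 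That combinatorial transfer of the sdepth upper bound from $I/J$ to a suitably chosen submodule is the missing engine in your proposal; without an analogue of Lemma \ref{ml}, the degenerate case remains open.
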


\begin{Lemma} \label{2} Suppose that $I=(x_1,x_2)$, $E=\emptyset$. If  $\sdepth_SI/J=2$ then\\  $\depth_SI/J\leq 2$.
\end{Lemma}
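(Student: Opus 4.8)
The plan is to reduce, via Lemma~\ref{l}, to a single rigid configuration and then to analyse that configuration by splitting $I/J$ into its ``pure'' parts according to divisibility by $x_1x_2$.

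First I would apply Lemma~\ref{l}. Here $r=2$ and $E=\emptyset$, so its hypotheses hold: Conjecture~\ref{c} is known for $k=1$ by Lemma~\ref{1}, and the condition at $k=r$ with $|H|<|E|$ is vacuous. Taking $f_1=x_1$, $f_2=x_2$, the lemma gives $\depth_S I/J\le 2$ whenever $C\not\subset(x_2)$, and, since the hypotheses are symmetric in $x_1,x_2$, also whenever $C\not\subset(x_1)$. Hence I may assume $C\subset(x_1)\cap(x_2)=(x_1x_2)$, that is, every squarefree monomial of degree $3$ in $I\setminus J$ is divisible by $x_1x_2$. In particular $x_1x_jx_k\in J$ for all $j,k\ge 3$ (otherwise it would lie in $C$ without being divisible by $x_2$), and symmetrically $x_2x_jx_k\in J$.

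Next I would decompose $I/J$ by divisibility by $x_1x_2$. Let $M=S/(J:x_1x_2)$ be the submodule of multiples of $x_1x_2$, and let $Q=S/((x_2)+(J:x_1))$ and $Q'=S/((x_1)+(J:x_2))$ be the pure-$x_1$ and pure-$x_2$ parts. The relation $C\subset(x_1x_2)$ makes the cross terms vanish and yields the exact sequence
\[
0\longrightarrow S/(J:x_1x_2)\longrightarrow I/J\longrightarrow Q\oplus Q'\longrightarrow 0 .
\]
If $x_1x_2\in J$ then $C=\emptyset$ and $M=0$, so $I/J\cong Q\oplus Q'$ and only $Q,Q'$ must be bounded. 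For $Q$ in general, the surviving squarefree monomials are $1$, $x_1$ and $x_j,x_1x_j$ for $j$ with $x_1x_j\notin J$, while no product $x_jx_k$ ($j,k\ge 3$) survives, by the previous paragraph; hence $\dim_S Q\le 2$ and so $\depth_S Q\le 2$, and likewise $\depth_S Q'\le 2$. On the other hand $M\cong(x_1x_2)/((x_1x_2)\cap J)$ is generated in degree $2$, so $\depth_S M\ge 2$ by \cite[Proposition 3.1]{HVZ}. The Depth Lemma then settles the case $\depth_S M>\depth_S(Q\oplus Q')$: there $\depth_S I/J=\depth_S(Q\oplus Q')\le 2$.

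The hard part is the balanced case $\depth_S M=\depth_S(Q\oplus Q')=2$, where the Depth Lemma gives only $\depth_S I/J\ge 2$ and the middle term may a priori rise; equivalently one must show the connecting map $\Coh{2}{Q\oplus Q'}\to\Coh{3}{M}$ is not injective, i.e.\ $\Coh{2}{I/J}\neq 0$. I expect this to be the only genuine obstacle. Rather than compute the connecting homomorphism, I would shrink the configuration: for a fixed $c=x_1x_2x_k\in C$, peel off an exact sequence whose quotient is built from the interval with top $c$ (hence of high sdepth) and whose complementary submodule has strictly fewer degree-$2$ generators; then Rauf's \cite[Lemma~2.2]{R} forces that submodule to have $\sdepth\le 2$, and an induction on $|B|$ using Lemma~\ref{1} and Theorem~\ref{P} bounds its depth, after which the Depth Lemma closes the case. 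As a cross-check, Shen's numerical criterion (Theorem~\ref{t1}) with $t=2$ already yields $\depth_S I/J=2$ in all but a short explicit list of minimal configurations, each of which collapses to $(x_1,x_2)\subset K[x_1,x_2,x_3]$ and is verified directly to have depth $2$.
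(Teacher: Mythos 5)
Your reduction to $C\subset(x_1x_2)$ is sound in outline, but the hypothesis of Lemma~\ref{l} is not supplied by Lemma~\ref{1} alone: with $r=2$, $E=\emptyset$ the lemma needs Conjecture~\ref{c} for $k=1$ with \emph{arbitrary} $H$, because the submodule $I'/J'$ constructed in its proof is generated by $f_2$ together with degree-two monomials from $B$, i.e.\ it is a $k=1$ case with $H\neq\emptyset$. So you must invoke Theorem~\ref{p2}, which is exactly what the paper does; Lemma~\ref{1} covers only $H=\emptyset$. (The paper's proof is two lines: it disposes of the case $C\subset(x_2)$ by citing \cite[Proposition 1.3]{PZ} and applies Lemma~\ref{l} with Theorem~\ref{p2} otherwise; you instead keep the residual case $C\subset(x_1x_2)$ and try to settle it by hand, which is a legitimate but different route.)

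The genuine gap is in your endgame, and you have in fact mislocated the hard case. For $0\to M\to I/J\to Q\oplus Q'\to 0$, the inequality $\depth_S M\ge\min\{\depth_S I/J,\;\depth_S(Q\oplus Q')+1\}$ shows the balanced case $\depth_S M=\depth_S(Q\oplus Q')=2$ is \emph{automatic}: $\depth_S I/J\ge 3$ would force $\depth_S M\ge 3$; equivalently $\Coh{1}{Q\oplus Q'}=0$ gives an injection $\Coh{2}{M}\hookrightarrow\Coh{2}{I/J}$ with $\Coh{2}{M}\neq 0$. What the Depth Lemma does \emph{not} settle --- contrary to your claim that it handles all of $\depth_S M>\depth_S(Q\oplus Q')$ --- is the off-by-one case $\depth_S M=\depth_S(Q\oplus Q')+1$ (e.g.\ $(3,2)$): compare $0\to\mm\to S\to K\to 0$, where the middle depth is arbitrarily larger than that of the cokernel. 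For precisely this dangerous case you offer only a sketch (``peel off \dots I expect this to be the only genuine obstacle'') and an unverified assertion that Shen's criterion leaves ``a short explicit list''; as written this is not a proof. Ironically, the case $C\subset(x_1x_2)$ closes cleanly by the counting you relegate to a cross-check, mirroring the paper's own Proposition~\ref{l1}: every $c=x_1x_2x_k\in C$ has all its degree-two divisors $x_1x_2$, $x_1x_k$, $x_2x_k$ in $B$ (they lie in $I$ and divide $c\notin J$), so $s\ge 2q+1$; for $q\ge 2$ this gives $\rho_3=q<s-r=\alpha_2$, and Theorem~\ref{t} with $t=2$ yields $\depth_S I/J\le 2$, while for $q\le 1<r=2$ one applies \cite[Corollary 2.6]{Sh} exactly as in Proposition~\ref{l1}. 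With that replacement (or with the paper's citation of \cite[Proposition 1.3]{PZ} for $C\subset(x_2)$) your argument would be complete; the $M$, $Q$, $Q'$ decomposition and its depth bounds are correct but by themselves insufficient.
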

\begin{proof}
 By \cite[Proposition 1.3]{PZ} we may suppose that $C\not\subset (x_2)$. Then   apply Lemma \ref{l}, its hypothesis is given by Theorem \ref{p2}.
 \end{proof}

We  need the following lemma, its proof is given in the next section.
\begin{Lemma} \label{l2} Suppose that $I \subset S$ is minimally generated by some squarefree monomials $\{f_1,f_2,f_3\}$ of degree $d$ and that $\sdepth I/J = d+1$. If there exists $c \in C \cap ((f_3) \setminus (f_1,f_2))$ then $\depth_S I/J \leq d+1$.
\end{Lemma}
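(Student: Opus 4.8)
The plan is to run the short exact sequence underlying Lemma \ref{l}, with $f_3$ in the role of the distinguished generator and $E=\emptyset$, and then to compare Stanley depth with usual depth on the smaller module. Since $c$ is a squarefree monomial of degree $d+2$ lying in $(f_3)$, I first write $c=f_3x_tx_p$ with $x_t,x_p\nmid f_3$, and set
\[
I'=(f_1,f_2,\,B\setminus\{f_3x_t,f_3x_p\}),\qquad J'=I'\cap J,
\]
together with the sequence $0\to I'/J'\to I/J\to I/(J+I')\to 0$. The hypothesis $c\in(f_3)\setminus(f_1,f_2)$ is precisely what makes this construction work: if $f_1$ or $f_2$ divided $f_3x_t$ or $f_3x_p$, it would divide $c$, contradicting $c\notin(f_1,f_2)$, so $f_3x_t,f_3x_p\notin(f_1,f_2)$ and the two removed monomials genuinely lie in $B$.

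The first real step is to check that the cokernel is as small as possible. Since $I=(f_1,f_2)+(f_3)$ we have $I/(J+I')\cong(f_3)/\big((f_3)\cap(J+I')\big)$, and I claim its only surviving squarefree monomials are $f_3,f_3x_t,f_3x_p,c$. Indeed, for each $j\neq t,p$ with $x_j\nmid f_3$ the monomial $f_3x_j$ lies either in $J$ or in $B\setminus\{f_3x_t,f_3x_p\}\subset I'$, so it dies, and therefore so does every squarefree multiple of it. On the other hand $c\notin J+I'$: it is not in $J$, not in $(f_1,f_2)$, and its only degree $d+1$ divisors are $f_3x_t$, $f_3x_p$ (removed from $I'$) and the monomials $(f_3/x_s)x_tx_p$ with $x_s\mid f_3$, which cannot even lie in $I$, since $f_1$, $f_2$ or $f_3$ dividing one of them would force a divisor of $c$. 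Hence the surviving squarefree part is exactly the interval $[f_3,c]$, and, just as in the proof of Lemma \ref{l}, both the Stanley depth and the depth of $I/(J+I')$ are $\geq d+2$.

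The remainder is formal. From $\sdepth_S I/J\leq d+1<d+2\leq\sdepth_S I/(J+I')$ and \cite[Lemma 2.2]{R} I get $\sdepth_S I'/J'\leq d+1$. Now $I'/J'$ is generated by the two squarefree monomials $f_1,f_2$ of degree $d$ together with squarefree monomials of degree $\geq d+1$, so the $k=2$ instance of Conjecture \ref{c} gives $\depth_S I'/J'\leq d+1$ (if instead $\sdepth_S I'/J'=d$, Theorem \ref{P} already yields $\depth_S I'/J'=d$). Finally, since $\depth_S I/(J+I')\geq d+2\geq\depth_S I'/J'+1$, the Depth Lemma forces $\depth_S I/J=\depth_S I'/J'\leq d+1$, as desired.

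The main obstacle is the appeal to Conjecture \ref{c} for $k=2$: Lemma \ref{2} supplies only the case $H=\emptyset$, whereas here $I'$ typically carries the extra degree $d+1$ generators $B\setminus\{f_3x_t,f_3x_p\}$. I would close this gap by an induction on the number of such extra generators, the base case being Lemma \ref{2} and the inductive step being a second application of Lemma \ref{l}, now with $r=2$ and nonempty $E$, so that its induction hypothesis is invoked only for strictly fewer extra generators. The delicate point in that induction is exactly the configuration in which the geometric hypothesis of Lemma \ref{l} fails, i.e.\ when every degree $d+2$ element of the relevant set $C$ already lies in the ideal generated by the distinguished generator together with all but one of the extra generators; that borderline case has to be treated directly, and I expect it to be the technical heart of the argument.
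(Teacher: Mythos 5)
Your opening reduction coincides exactly with the paper's: the same $I'=(f_1,f_2,B\setminus\{f_3x_t,f_3x_p\})$, the same exact sequence, a correct (indeed more detailed than the paper's) verification that the cokernel is spanned by the interval $[f_3,c]$ and so has sdepth $d+2$ and depth $\geq d+1$, and the correct use of \cite[Lemma 2.2]{R}, Theorem \ref{P} and the Depth Lemma to reduce everything to showing that $\sdepth_S I'/J'=d+1$ implies $\depth_S I'/J'\leq d+1$. The gap is the step you yourself flag at the end: you invoke ``the $k=2$ instance of Conjecture \ref{c}'' for an ideal carrying extra generators of degree $d+1$, but no such result exists --- the paper proves the conjecture only for $k=1$ with arbitrary $H$ (Theorem \ref{p2}) and for $1<k\leq 3$ with $H=\emptyset$ (Proposition \ref{l1}, Theorem \ref{p3}); Lemma \ref{l2} is needed precisely because the case $k=2$, $H\neq\emptyset$ is unavailable. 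Your proposed repair, an induction on the number of extra generators with Lemma \ref{l} as inductive step, only advances when some element of $C'$ escapes $(f_2,E')$, $(f_1,E')$, or $(f_1,f_2,E'\setminus\{a\})$; the complementary borderline configuration, roughly $C'\subset ((f_1)\cap(f_2))\cup(E')$, is exactly what Lemma \ref{l} cannot touch, and you defer it. That deferred case is not a loose end but the entire content of the result: already for $k=1$ the analogous borderline case required the full path/bad-path machinery of Lemma \ref{ml} (all of Section 5), and no $k=2$ analogue is proved anywhere in the paper.

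It is worth seeing how the paper escapes, because it shows your reduction target is strictly stronger than the lemma itself. The paper never proves a general two-generator statement with $E\neq\emptyset$; after the same reduction it exploits structure special to this situation: $E'\subset(f_3)$, the distinguished monomial $c$, auxiliary partitions ${\mathcal P}_E$ of $(f_1,f_2)/J'_E$ and ${\mathcal P}_i$ with sdepth $d+2$ (whose existence may be assumed, else Proposition \ref{l1} and Theorem \ref{p2} finish via the Depth Lemma), the forced membership $w_{12},w_{13}\in B'\cup C'$, and then a six-case analysis of interval configurations in which each case either yields a forbidden partition of $I'/J'$ of sdepth $d+2$, reduces to an earlier case, or is settled by Lemma \ref{el} or the counting bound of \cite{P1}. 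An induction on $|E|$ over arbitrary ideals generated by two degree-$d$ monomials plus $E$ discards precisely this structure, so your inductive hypothesis is a genuinely open statement rather than something obtainable from the paper's toolkit. As written, the proof is incomplete at its decisive step.
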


\begin{Proposition} \label{p} Suppose that $I=(x_1,x_2,x_3)$, $E=\emptyset$. If  $\sdepth_SI/J=2$ then  $\depth_SI/J\leq 2$.
\end{Proposition}
\begin{proof}   By \cite[Proposition 1.3]{PZ} we may suppose that $C\not\subset (x_1,x_2)$. Then  we may  apply Lemma \ref{l2}.
\end{proof}
\begin{Remark} {\em When $J=0$ the above proposition follows  quickly from \cite{BHK} (see also \cite{HVZ}).}
\end{Remark}

\section{Case $r,d>1$}

\begin{Proposition} \label{l1} Suppose that $I \subset S$ is generated by two squarefree monomials $\{f_1,f_2\}$ of degrees $d$. Assume that  $\sdepth_S I/J\leq d+1$. Then $\depth_S I/J\leq d+1$.
\end{Proposition}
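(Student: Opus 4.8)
The plan is to mirror the strategy of Lemma \ref{2} and Proposition \ref{p}, now with two degree-$d$ generators and arbitrary $d$. Since $E=\emptyset$, every squarefree monomial of $I$ of degree $d+2$ — hence every element of $C$ — is divisible by $f_1$ or by $f_2$. So the natural dichotomy is whether $C\subset (f_2)$ or $C\not\subset (f_2)$, and the two branches will be handled respectively by \cite[Proposition 1.3]{PZ} and by Lemma \ref{l}.

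In the case $C\not\subset (f_2)$ I would apply Lemma \ref{l} with $r=2$ and $E=\emptyset$. Its first alternative reads $C\not\subset (f_2,\ldots,f_r,E)=(f_2)$, so a witness $c\in (f_1)\setminus (f_2)$ — necessarily of the form $c=f_1x_tx_p$ for two variables outside the support of $f_1$ — feeds the short exact sequence of that lemma and yields $\depth_S I/J\leq d+1$. The one thing to verify is the standing hypothesis of Lemma \ref{l}: that Conjecture \ref{c} holds for $k<r=2$, i.e.\ for $k=1$. This is exactly what Lemma \ref{1} (the case $E=\emptyset$) and Theorem \ref{p2} (a nonempty set of degree-$(d+1)$ generators) provide, and the latter shape is precisely that of the auxiliary ideal $I'=(f_2,\,B\setminus\{f_1x_t,f_1x_p\})$ built inside the proof of Lemma \ref{l}.

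For the complementary case $C\subset (f_2)$ I would invoke \cite[Proposition 1.3]{PZ}, exactly as in Lemma \ref{2} and Proposition \ref{p}; this is the step that lets one \emph{suppose} $C\not\subset (f_2)$ from the outset, disposing of the situation in which $f_1$ produces no new squarefree monomial of degree $d+2$. I note that, unlike Lemma \ref{1}, no preliminary reduction to $d=1$ is needed here, since Lemma \ref{l}, Theorem \ref{p2} and Lemma \ref{1} are all valid for arbitrary $d$; the argument therefore runs uniformly in $d$.

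The content here is assembly rather than new computation, so the genuine obstacle is to confirm that the borrowed inputs really match. The delicate point is that Lemma \ref{l} trades $I/J$ for $I'/J'$ whose degree-$d$ part has collapsed to the single generator $f_2$ while possibly gaining degree-$(d+1)$ generators; one must be sure this $k=1$ configuration is covered by Theorem \ref{p2} (or by Lemma \ref{1} when no such generators survive) and that the bound $\sdepth_S I'/J'\leq d+1$ survives the reduction — this is where \cite[Lemma 2.2]{R} and the Depth Lemma inside Lemma \ref{l} carry the weight. Should \cite[Proposition 1.3]{PZ} not cover $C\subset (f_2)$ in full generality, the fallback is to treat it by the same exact-sequence and Hall's-marriage mechanism behind Theorem \ref{P}, splitting off $f_1$ and recursing to the settled $r=1$ case.
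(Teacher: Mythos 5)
Your first branch does match the paper: when some $c\in C$ lies in $(f_1)\setminus(f_2)$, Lemma \ref{l} applies with $r=2$, $E=\emptyset$, and its standing hypothesis for $k=1$ is exactly what Lemma \ref{1} and Theorem \ref{p2} supply, since the auxiliary ideal $I'=(f_2,B\setminus\{f_1x_t,f_1x_p\})$ has one degree-$d$ generator plus degree-$(d+1)$ generators. Note, though, that the right dichotomy is on $w=\lcm(f_1,f_2)$ rather than on $(f_2)$: by symmetry Lemma \ref{l} is available whenever $C\not\subset(f_1)\cap(f_2)=(w)$ (swap the roles of $f_1$ and $f_2$ if the witness lies in $(f_2)\setminus(f_1)$), so your bad branch $C\subset(f_2)$ needlessly contains easy cases, and the genuinely hard case is $C\subset(w)$.

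That hard case is where your proposal has a real gap. You dispatch it by citing \cite[Proposition 1.3]{PZ} ``exactly as in Lemma \ref{2} and Proposition \ref{p}'', but in the paper that proposition is invoked only for ideals generated by \emph{variables}, i.e.\ $d=1$; for two arbitrary squarefree monomials of degree $d$ there is no reduction to that situation unless $\deg w=d+1$, when $f_1,f_2$ share a common factor of degree $d-1$ that can be divided out (the reduction used inside Lemma \ref{el}) --- if $\deg w\geq d+2$ nothing of the sort exists, and indeed the paper's own proof of this proposition never cites \cite{PZ}. Instead it settles $C\subset(w)$ numerically: if $q\leq 1$ then $q<r=2$ and \cite[Corollary 2.6]{Sh} (see Theorem \ref{t1}) gives $\depth_S I/J\leq d+1$; the cases $w\in J$, $\deg w>d+2$, and $w\in C$ with $q>1$ force $C\not\subset(w)$ and fall into the first branch; and in the remaining case $w\in B$ one has $C=\{wx_1,\dots,wx_q\}$ after renumbering, so $B$ contains the distinct monomials $w$, $f_1x_i$, $f_2x_i$, $i\in[q]$, whence $s\geq 2q+1>q+2$ for $q>1$, and \cite[Theorem 1.3]{P1} (Theorem \ref{t}) concludes. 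Your fallback --- the ``Hall's marriage mechanism behind Theorem \ref{P}'' --- cannot substitute for this: Theorem \ref{P} treats $\sdepth_S I/J=d$, whereas here $\sdepth_S I/J=d+1$, and what actually carries the bad case is the $\rho$-counting of Theorems \ref{t} and \ref{t1}, which your argument never brings into play.
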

\begin{proof} We may suppose that $I$ is minimally generated by $f_1,f_2$ because otherwise  apply the Theorem \ref{p2}. Let $w$ be the least common multiple of $f_1,f_2$. First suppose that $C\not\subset (w)$. This is the case when $w\in J$, or $\deg w>d+2$, or $w\in C$ and $q>1$. Then it is enough to apply Lemma \ref{l}, the case $r=1$ being done in the Theorem \ref{p2}. If $q=1$ then $r>q$ and by \cite[Corollary 2.6]{Sh} (see also Theorem \ref{t1}) we get  $\depth_S I/J\leq d+1$.  Assume that $w\in B$. After renumbering the variables $x_i$ we may suppose that $C=\{wx_i:1\leq i\leq q\}$ and so in $B$ we have at least the elements of the form $w,f_1x_i,f_2x_i$, $1\leq i\leq q$ . Thus $s \geq 2q+1>q+2$ when $q>1$ and by \cite[Theorem 1.3]{P1} (see Theorem \ref{t}) we are done.
\end{proof}

\begin{Lemma} \label{el} Suppose that $I \subset S$ is  generated by three squarefree monomials\\
 $\{f_1,f_2,f_3\}$ of degrees $d$, $\sdepth_SI/J=d+1$ and
 let $w_{ij}$ be the least common multiple of $f_i,f_j$, $1\leq i<j\leq 3$. If $w_{12},w_{13},w_{23}\in B$ and are different then $\depth_SI/J\leq d+1$.
\end{Lemma}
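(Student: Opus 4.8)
The plan is to use the hypotheses to pin down the precise shape of $f_1,f_2,f_3$, then factor out their greatest common divisor and reduce to the already--settled case $I=(x_1,x_2,x_3)$ of Proposition \ref{p}.

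First I would determine the generators combinatorially. Writing $F_i$ for the set of variables dividing $f_i$, the assumption $w_{ij}\in B$ forces $\deg w_{ij}=d+1$, i.e. $|F_i\cap F_j|=d-1$, so any two of the $F_i$ differ by a single variable. Since $|F_1\cap F_3|=d-1$ and $F_1\subseteq D$, at most one variable of $F_3$ lies outside $D:=F_1\cup F_2$ (which has $d+1$ elements), leaving only two possibilities: either $F_3\subseteq D$, whence $F_i=D\setminus\{c_i\}$ with distinct $c_i$ and $w_{12}=w_{13}=w_{23}=D$; or $F_3\not\subseteq D$, whence $F_1\cap F_2=F_1\cap F_3=F_2\cap F_3=:A$ with $|A|=d-1$ and $f_i=g\,x_{c_i}$, where $g=\prod_{k\in A}x_k$ has degree $d-1$ and $c_1,c_2,c_3$ are distinct indices outside $A$. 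Since the $w_{ij}$ are assumed distinct, the first case is excluded and we are in the ``star'' case $f_i=g\,x_{c_i}$.

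Next I would factor out $g$. As $J\subseteq I\subseteq(g)$, every minimal generator $m$ of $J$ is a multiple of $g$, so $J_0:=(J:g)$ is generated by the squarefree monomials $m/g$; these are coprime to $g$ and have degree $\deg m-(d-1)\geq 2$. Thus $J_0\subseteq I_0:=(x_{c_1},x_{c_2},x_{c_3})$ is squarefree, is generated in degrees $\geq 2$, and none of the $d-1$ variables dividing $g$ occurs in $J_0$. Multiplication by $g$ is an isomorphism of $\ZZ^n$--graded modules $(I_0/J_0)(-\deg g)\iso I/J$; since a degree shift changes neither invariant, $\depth_S I/J=\depth_S I_0/J_0$ and $\sdepth_S I_0/J_0=d+1$.

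Now I would strip the free variables. A variable outside $\{x_{c_1},x_{c_2},x_{c_3}\}$ and outside $\supp J_0$ does not occur in $I_0/J_0$, so removing it lowers both $\depth$ and $\sdepth$ by exactly $1$ (additivity over a polynomial extension, \cite[Lemma~3.6, Proposition~3.6]{HVZ}). Let $p$ be the number of such variables; it is at least the $d-1$ variables dividing $g$. After deleting them I obtain $\tilde I_0=(x_{c_1},x_{c_2},x_{c_3})\supsetneq\tilde J_0$ over a smaller polynomial ring $\tilde S$, with $\tilde J_0$ squarefree and generated in degrees $\geq 2$, and with $\sdepth_{\tilde S}\tilde I_0/\tilde J_0=d+1-p$. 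As $\tilde I_0$ is generated in degree $1$ this Stanley depth is at least $1$, so $p\in\{d-1,d\}$ and $\sdepth_{\tilde S}\tilde I_0/\tilde J_0\in\{1,2\}$.

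Finally I would apply the settled cases after relabelling $x_{c_1},x_{c_2},x_{c_3}$ as $x_1,x_2,x_3$. If $\sdepth_{\tilde S}\tilde I_0/\tilde J_0=2$, Proposition \ref{p} gives $\depth_{\tilde S}\tilde I_0/\tilde J_0\leq 2$; if it is $1$, Theorem \ref{P} gives $\depth_{\tilde S}\tilde I_0/\tilde J_0=1$. In both cases $\depth_{\tilde S}\tilde I_0/\tilde J_0\leq d+1-p$, so restoring the $p$ free variables yields $\depth_S I/J=\depth_{\tilde S}\tilde I_0/\tilde J_0+p\leq d+1$. The one delicate point, and the crux of the argument, is the variable bookkeeping above: one must check that dividing by $g$ frees enough variables ($p\geq d-1$) to bring the Stanley depth of the reduced module down to $1$ or $2$, which is exactly what places it in the range covered by Theorem \ref{P} and Proposition \ref{p} and makes the three--generated star case collapse to the degree--one situation.
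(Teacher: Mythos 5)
Your proof is correct and follows essentially the same route as the paper: use $w_{ij}\in B$ to force $|F_i\cap F_j|=d-1$, rule out the case $F_3\subseteq F_1\cup F_2$ because it makes the $w_{ij}$ coincide, and reduce the remaining ``star'' case $f_i=g\,x_{c_i}$ to $d=1$, settled by Proposition \ref{p}. The only difference is that you spell out the reduction (factoring out $g$, stripping the $p$ free variables via \cite[Lemma 3.6]{HVZ}, and covering the subcase where the stripped Stanley depth drops to $1$ via Theorem \ref{P}) which the paper compresses into the single sentence ``this case may be reduced to $d=1$''.
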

\begin{proof}  After renumbering the variables $x_i$ we may assume that $f_1=x_1\cdots x_d$ and $f_2=x_1\cdots x_{d-1}x_{d+1}$. We see that $f_3$ must have $d-1$ variables in common with $f_1$ and also with $f_2$. If $f_3 \notin (x_1...x_{d-1})$ then we may suppose that $f_3=x_2...x_dx_{d+1}$ and $w_{12}=w_{13}$, which is false. It remains that $f_3 \in (x_1\cdots x_{d-1})$ so $f_3=x_1\cdots x_{d-1}x_{d+2}$. But this case may be reduced to $d=1$ which is done in Proposition \ref{p}.
\end{proof}

\begin{Lemma} \label{key} If $C\subset (w_{12},w_{13},w_{23})$ and $\sdepth_S I/J\leq d+1$ then $\depth_S I/J\leq d+1$.
\end{Lemma}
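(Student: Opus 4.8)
The plan is to treat Lemma \ref{key} as the complementary case to Lemmas \ref{l2} and \ref{el}, pushing every configuration either onto an already-settled small case or onto the numerical criterion of Theorem \ref{t}. First I would normalize the generators: after dividing out $\gcd(f_1,f_2,f_3)$ and skipping the corresponding variables via \cite[Lemma 3.6]{HVZ}, I may assume the three generators have no common variable, which lowers $d$ and preserves both depth and sdepth. Next observe that $c\in(w_{ij})$ is equivalent to $f_i\mid c$ and $f_j\mid c$, so the hypothesis $C\subset(w_{12},w_{13},w_{23})$ says precisely that every $c\in C$ is divisible by at least two of the $f_i$; the opposite situation, some $c\in C$ lying in a single generator, is exactly what Lemma \ref{l2} and its two symmetric versions dispose of. Thus throughout I work under the assumption that each $c\in C$ is a multiple of some $w_{ij}$ of degree $d+1$ or $d+2$.

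I would then run a case analysis on the degrees $\deg w_{ij}\in\{d+1,d+2\}$, a pair with $\deg w_{ij}\ge d+3$ covering no element of $C$ and being negligible. A pair with $\deg w_{ij}=d+2$ can only cover the single monomial $w_{ij}$ itself, while a pair with $\deg w_{ij}=d+1$ covers the multiples $w_{ij}x_p$. The key algebraic observation is that two coinciding degree-$(d+1)$ lcm's force all three to coincide with a common $w=f_ix_a$ of degree $d+1$; in that degenerate case the three generators share $d-2$ variables, and after factoring they become the ``triangle'' $x_bx_c,x_ax_c,x_ax_b$, a low-dimensional configuration I would settle directly, either by the numerical bound below or by inspection when the number of variables is small. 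When instead the three degree-$(d+1)$ lcm's are distinct and lie in $B$, Lemma \ref{el} already finishes the argument, so the residual work is exactly the mixed-degree and $J$-affected subcases.

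For the residual cases the plan is to apply Theorem \ref{t} with $t=d+1$. Since $\rho_d=3$, we have $\alpha_{d+1}=\rho_{d+1}-\rho_d=s-3$, so it suffices to prove the count $q=\rho_{d+2}<s-3$, the inequality being sharpened if necessary by Shen's Theorem \ref{t1}. The upper bound on $q$ comes from the covering $C\subset(w_{12},w_{13},w_{23})$: each relevant $w_{ij}$ of degree $d+2$ contributes at most one element and each of degree $d+1$ contributes at most $n-(d+1)$ elements $w_{ij}x_p$, while the lower bound on $s$ comes from noting that every covered $c=w_{ij}x_p$ drags the degree-$(d+1)$ companions $f_ix_p,f_jx_p,w_{ij}$ into $B$, so that $B$ grows faster than $C$.

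The main obstacle is that this count is only robust when $J$ is small: $J$ can delete degree-$(d+1)$ monomials from $B$, collapsing the inequality $s>q+3$, and at the same time Lemma \ref{l} is unavailable here because its hypothesis $C\not\subset(f_2,f_3)$, i.e. that some $c\in C$ is reachable from a single generator, is exactly what the present hypothesis negates. So the crux is to produce, whenever the numerical bound fails, a workable exact sequence $0\to I'/J'\to I/J\to I/(J+I')\to 0$: I would pick a covered $c=w_{12}x_p\in C$ and strip from $I$ the degree-$(d+1)$ divisors of $c$ lying in $(f_1,f_2)$, arranging the quotient to have sdepth and depth $\ge d+2$ and the submodule $I'/J'$ to have at most two pure generators, so that Proposition \ref{l1} (or Theorem \ref{p2}) together with \cite[Lemma 2.2]{R} and the Depth Lemma closes the case. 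Making this stripping legitimate when $c$ is simultaneously reachable from two generators, so that the quotient is a sum of two intervals rather than a single interval, is the delicate point, and I expect it to require exactly the careful bookkeeping that the referee flagged in the original argument.
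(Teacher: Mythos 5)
Your skeleton is the same as the paper's: eliminate coinciding lcm's (your observation that two equal degree-$(d+1)$ lcm's force all three to coincide is correct, and it is exactly how the paper reduces to ``all $w_{ij}$ different''; in the coinciding case the paper does not need your triangle reduction, since the direct count $s\geq 3q+1>q+3$ already works), apply Lemma \ref{el} when the three distinct lcm's lie in $B$, and close the remaining cases by Theorem \ref{t} with $t=d+1$, i.e.\ by proving $s>q+3$. But the decisive step --- actually establishing $s>q+3$ in the mixed cases --- is precisely where you stop at the slogan that ``$B$ grows faster than $C$'', and that slogan fails at the boundary. The companion sets $B_{ij}$ attached to the various pairs overlap (for instance $x_{d+1}f_1$ or $x_2f_3$ can be counted twice), and the paper's proof consists essentially of bounding these overlaps case by case; in the case $w_{12}\in B$, $w_{13},w_{23}\in C$ all distinct, the overlap bounds only yield $s\geq 2q$, which at $q=3$ equals $q+3$ and is \emph{not} sufficient, so the paper needs a separate finer analysis there ($q_{12}=1$, $B_{12}=\{w_{12},x_tf_1,x_tf_2\}$, with sub-cases on $t$) to get the strict inequality. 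You also never dispose of small $q$: for $q\leq 2<r=3$ the inequality $s>q+3$ is generally out of reach, and the paper opens by invoking \cite[Corollary 2.6]{Sh} (your Theorem \ref{t1}) to settle $q<r$ outright; your vague appeal to ``sharpening by Shen'' does not substitute for this.

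Moreover, your declared ``main obstacle'' is misdiagnosed, and the fallback you design for it would not work. Since $C\subset I\setminus J$ and $J$ is an ideal, every divisor of a $c\in C$ lies outside $J$; hence $J$ can never delete the companions $f_ix_p$, $f_jx_p$, $w_{ij}$ of a covered $c=w_{ij}x_p$ from $B$. If $w_{ij}\in J$ or $\deg w_{ij}>d+2$, that pair simply covers no element of $C$, which the paper absorbs by adjusting the count ($q=q_{12}+q_{13}$, etc.), making the inequality easier, not harder. Consequently the exact-sequence stripping you hold in reserve is never needed in the corrected proof --- and as sketched it fails: under the hypothesis of Lemma \ref{key} every $c\in C$ is divisible by two of the generators, so after stripping the degree-$(d+1)$ divisors of one $c$ the quotient $I/(J+I')$ retains two degree-$d$ generators $f_1,f_2$, and the single surviving monomial $c$ cannot terminate two disjoint intervals $[f_1,\cdot]$ and $[f_2,\cdot]$; the sdepth $\geq d+2$ of the quotient, which \cite[Lemma 2.2]{R} requires before you may conclude anything about $I'/J'$, is therefore unavailable. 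The paper's proof of Lemma \ref{key} contains no such sequence: it runs entirely on Lemma \ref{el}, Shen's criterion for $q<r$, and the overlap-counted inequality $s>q+3$ feeding Theorem \ref{t}. Your proposal could likely be completed along those lines, but as written the crux (the overlap bookkeeping and the $q=3$ boundary) is missing and the escape hatch is a dead end.
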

\begin{proof}
 Note that if $q<r=3$ then $\depth_S I/J\leq d+1$ by \cite[Corollary 2.6]{Sh} (see here Theorem \ref{t1}). Suppose that $q>2$.

 Now assume that all $w_{ij}\in B$. Set $C_{ij}=C\cap (w_{ij})$, $q_{ij}=|C_{ij}|$ and $B_{ij}$ the set of all $b\in B$ which divide some $c\in C_{ij}$.
If all $w_{ij}$ are equal, let us say  $w_{ij}=w$, then  after renumbering the variables $x_i$ the monomials of  $C$ have the form $wx_t$, $1\leq t\leq q$. Thus $B$ contains $w$ and $f_jx_t$ for $j\in [3]$ and $t\in [q]$. It follows that $s\geq 3q+1>q+3$ for $q>1$ and so $\depth_S I/J\leq d+1$ by \cite[Theorem 1.3]{P1}.
Then we may suppose that all $w_{ij}$ are different and we may apply Lemma \ref{el}.

Next assume that $w_{12},w_{13}\in B$ and $w_{23}\in C$. As above we can assume that $f_2=x_1\cdots x_d$, $f_3=x_3\cdots x_{d+2}$ and $f_1=x_2\cdots x_{d+1}$.  We have $C\subset C_{12}\cap C_{13}$, and  $q= q_{12}+q_{13}-1$ because $w_{23}\in C_{12}\cap C_{13}$.  As in the case of $r=2$ we have $|B_{12}|= 2q_{12}+1$ and $|B_{13}\setminus B_{12}|\geq 2q_{13}-\min\{q_{12},q_{13}\}$. It follows that $s\geq 2q+4-\min\{q_{12},q_{13}\}> q+3$, which implies $\depth_S I/J\leq d+1$ by \cite[Theorem 1.3]{P1}. Note that if  $w_{23}\in J$, or $\deg w_{23}>d+2$ then  $q= q_{12}+q_{13}$ and we get in the same way that $s\geq 2q+2-\min\{q_{12},q_{13}\}\geq q+3$. Thus $\depth_S I/J\leq d+1$ unless  $q_{12}=q_{13}=1$. The last case is false because $q>2$.

Suppose that  all $w_{ij}$ are different, $w_{12}\in B$ and $w_{23},w_{13}\in C$. We may assume that  $f_2=x_1\cdots x_d$, $f_3=x_3\cdots x_{d+2}$ and $f_1=x_2\cdots x_d\cdot x_{d+3}$. We have $q=q_{12}+2$, $B_{12}\cap B_{13}\subset \{x_{d+1}f_1,x_{d+2}f_1\}$ and so $|B_{13}\setminus B_{12}|\geq 2$. Also note that $B_{23}\cap (B_{12}\cup B_{13})\subset \{x_{d+1}f_2,x_{d+2}f_2,x_2f_3\}$  and so $|B_{23}\setminus (B_{12}\cup B_{13})|\geq 1$. It follows that
$s\geq 2q_{12}+1+2+1=2q$. If $q>3$ we get $s>q+3$ and so  $\depth_S I/J\leq d+1$ by \cite{P1}. If $q=3$ then $q_{12}=1$ and so $B_{12}=\{w_{12},x_tf_1,x_tf_2\}$ for some $x_t\not |f_1$, $x_t\not |f_2$. If $t=d+1$ or $t=d+2$ then we see that $|B_{13}\setminus B_{12}|\geq 3$ and so $s>6=r+q$, which is enough. If $t>d+3$ then $s$ is even bigger than $7$. If let us say $w_{23}\in J$, or  $\deg w_{23}>d+2$ then  $q= q_{12}+1$ and as above $s\geq 2q_{12}+1+2=2q+1>q+3$ because $q\geq 3$, which is again enough. If also $w_{13}\in J$, or  $\deg w_{13}>d+2$ then  $q= q_{12}$ and as above $s\geq 2q_{12}+1=2q+1>q+3$ because $q\geq 3$.

Suppose that   $w_{12}\in B$ and $w_{23}=w_{13}\in C$. We may assume that  $f_2=x_1\cdots x_d$, $f_3=x_3\cdots x_{d+2}$ and $f_1=x_1\cdots x_{d-1}\cdot x_{d+2}$. We have $q=q_{12}$ and  $B_{12}\supset B_{13}$. Thus $s\geq 2q_{12}+1=2q+1>q+3$ and so again $\depth_S I/J\leq d+1$.

Finally if all $w_{ij}$ are in $C$ (they must be different, otherwise $q\leq 2$ which is false) then $q=3$ , $q_{ij}=1$ and we get $s\geq 12>q+3$ which is again enough.
\end{proof}

\begin{Theorem}\label{p3}  Suppose that $I \subset S$ is  generated by three squarefree monomials $\{f_1,f_2,f_3\}$ of degrees $d$,  and $\sdepth_S I/J= d+1$. Then $\depth_S I/J\leq d+1$.
\end{Theorem}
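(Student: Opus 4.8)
The plan is to reduce Theorem \ref{p3} to the lemmas already assembled, by analyzing where the degree $d+2$ monomials of $C$ can live relative to the three generators $f_1,f_2,f_3$ and their pairwise least common multiples $w_{12},w_{13},w_{23}$. The central dichotomy is whether or not $C$ is contained in the ideal $(w_{12},w_{13},w_{23})$ generated by these three lcm's.

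\medskip

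First I would dispose of the easy structural cases. If $I$ is not minimally generated by three monomials of degree $d$, then one generator is redundant and we fall back to Proposition \ref{l1} (the two-generator case) or Theorem \ref{p2}. So we may assume $f_1,f_2,f_3$ are a minimal system of generators. Next, I would invoke Lemma \ref{l}: if $C$ fails to be contained in one of the ideals $(f_2,f_3)$ (or more generally in the ideal generated by the other two generators), then there is a degree $d+2$ monomial $c$ divisible by $f_1$ alone, and Lemma \ref{l2} (together with its symmetric versions obtained by permuting the indices) immediately yields $\depth_S I/J \leq d+1$. Thus the remaining situation is exactly the one where every $c \in C$ is divisible by at least two of the three generators, which is precisely the hypothesis $C \subset (w_{12},w_{13},w_{23})$.

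\medskip

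With that reduction in hand, the theorem follows directly from Lemma \ref{key}, which handles the case $C \subset (w_{12},w_{13},w_{23})$ by a careful combinatorial count: one partitions the degree $d+2$ monomials according to which lcm's they are divisible by, estimates the sizes $q_{ij}=|C \cap (w_{ij})|$ and the corresponding sets $B_{ij}$ of degree $d+1$ monomials lying below them, and shows in every configuration (all $w_{ij}$ equal, all different and in $B$, some in $B$ and some in $C$, etc.) that the number $s=|B|$ of degree $d+1$ monomials exceeds $q+3=q+r$. Then Theorem \ref{t} (the Popescu bound, $\rho_{d+1} > \alpha_d$) forces $\depth_S I/J = d$, or more precisely $\leq d+1$, independently of the characteristic. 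The degenerate sub-cases where some $w_{ij}$ lies in $J$ or has degree $>d+2$, or where two generators have an lcm already in $B$ of low multiplicity, are pushed into Lemma \ref{el} or reduced to the $d=1$ situation settled in Proposition \ref{p}.

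\medskip

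The main obstacle I anticipate is not any single deep idea but the combinatorial bookkeeping inside Lemma \ref{key}: one must verify the inequality $s > q+3$ uniformly across all the positional configurations of $w_{12},w_{13},w_{23}$ (whether each lies in $B$, in $C$, in $J$, or has too high a degree), and the counts $|B_{ij}\setminus B_{kl}|$ depend delicately on how many variables the generators share and on the exact overlaps of the sets $B_{ij}$. Getting the intersection estimates such as $B_{23}\cap(B_{12}\cup B_{13}) \subset \{x_{d+1}f_2, x_{d+2}f_2, x_2 f_3\}$ exactly right — and checking the small residual cases like $q=3$, $q_{12}=1$ by hand — is where the real care is required, and indeed where the Referee's noted corrections were needed. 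Once those counts are secured, the passage from $s>q+r$ to the depth bound is automatic via Theorem \ref{t}.
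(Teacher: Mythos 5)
Your proposal is correct and follows essentially the same route as the paper: reduce to a minimal generating set via Proposition \ref{l1}, observe that $C\not\subset(w_{12},w_{13},w_{23})$ produces some $c\in C$ divisible by exactly one generator so that Lemma \ref{l2} (up to permuting indices) applies, and otherwise invoke Lemma \ref{key}. Your brief detour through Lemma \ref{l} is unnecessary (Lemma \ref{l2} alone handles the non-containment case), and your citation of Theorem \ref{t} should read $\rho_{t+1}<\alpha_t$ with $t=d+1$, i.e.\ $q<s-3$, but these are cosmetic slips that do not affect the argument.
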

\begin{proof} We may suppose that $I$ is minimally generated by $f_1,f_2,f_3$ because otherwise apply Proposition \ref{l1}.
If $C\not\subset (w_{12},w_{13},w_{23})$ then apply Lemma \ref{l2}.
 Thus we may suppose that $C\subset (w_{12},w_{13},w_{23})$ and we may apply Lemma \ref{key}.
\end{proof}

\section{Proof of Lemma \ref{l2}}

Let $c= f_3x_{i_3}x_{j_3}$ and  set $I' = (f_1,f_2,B \setminus \{f_3x_{i_3},f_3x_{j_3}\}), J' = I' \cap J$. Consider the following exact sequence

$$0\to I'/J'\to I/J \to I/(I' + J)\to 0.$$

The last term has $\sdepth =  d+2$ so by \cite[Lemma 2.2]{R} we get that the first term has $\sdepth \leq d+1$. If $\depth I'/J' \leq d+1$ then by Depth Lemma we are done. It is {\bf enough  to show that} $\sdepth_SI'/J' = d+1$ {\bf implies}  $\depth_SI'/J' \leq d+1$, or directly $\depth_SI/J \leq d+1$.
Note that if $\sdepth_SI'/J'=d$ then $\depth_SI'/J'=d$ by Theorem \ref{P}.
Let $B'$, $C'$, $E'$ be similar to $B$, $C$, $E$ in the case of $I'/J'$.

We see that $E' \subset (f_3)$.
We may suppose that $C'\subset ((f_1)\cap (f_2))\cup (E')$ and $E' \neq \emptyset$, otherwise apply Lemma \ref{l} with the help of Theorem \ref{p2}.

Set $I'_E = (f_1,f_2), J'_E = I'_E \cap J'$ and for all $i\in [n]\setminus \supp f_1$ such that $f_1x_i\in B'\setminus (f_2)$ set $I'_i=(f_2,B\setminus\{f_1x_i\})$,   $J'_i = I'_i \cap J'$. We may suppose that $\sdepth_S I'_E / J'_E \geq d+2$ and $\sdepth_SI'_i/J'_i\geq d+2$. Indeed, otherwise one of the left terms from the following exact sequences
$$0\to I'_E/J'_E\to I'/J'\to I'/I'_E+J'\to 0,$$
$$0\to I'_i/J'_i\to I'/J'\to I'/I'_i+J'\to 0,$$
have depth $\leq d+1$ by Proposition \ref{l1} and Theorem \ref{p2}. With the Depth Lemma we get $\depth_SI'/J'\leq d+1$ since the right terms above have depth $\geq d+1$.
Let ${\mathcal P}_E$, ${\mathcal P}_i$ be partitions of $I'_E/J'_E$, $I'_i/J'_i$ with sdepth $d+2$.
	We may choose ${\mathcal P}_E$ and ${\mathcal P}_i$ such that each interval starting with a squarefree monomial of degree $\leq d+1$ ends with a monomial from $C'$.

Our goal is mainly to {\bf reduce our problem to the case when} $w_{13},w_{12}\in B'\cup C'$.
{\bf Case 1}\ \ $C'\not \subset (f_1,f_3)\cap (f_2,f_3)$

Let for example  $c=f_1x_ux_v \in C'\setminus (f_2,f_3)$, set $I''=(f_2,B' \setminus \{f_1x_u,f_1x_v\}),J'' = I'' \cap J'$ and consider the exact sequence:

$$0\to I''/J''\to I'/J' \to I'/(I'' + J')\to 0.$$

The last term has sdepth  $d+2$ so by \cite[Lemma 2.2]{R} we see that the first term has $\sdepth \leq d+1$. Using Theorem \ref{p2} we have $\depth_S I''/J'' \leq d+1$ and then by the Depth lemma we get $\depth_S I'/J' \leq d+1$ ending Case 1.

Let $f_1= x_1...x_d$ , in ${\mathcal P}_E$ we have the intervals $[f_1,c_1],[f_2,c_2]$ and so at least one of $c_1,c_2$, let us say   $c_1 = f_1x_ix_j$,  is not a multiple of  $w_{12}$.
In ${\mathcal P}_i$ we have the interval $[b,c_1]$ for some $ b\in E'$, otherwise replacing the interval $[f_1x_j,c_1]$ or the interval $[c_1,c_1]$ with the interval $[f_1,c_1]$ we get a partition ${\mathcal P}$ for $I'/J'$ with $\sdepth = d+2$.

 {\bf Case 2} \ There exists $t\in [n]$, $t\not\in \supp f_1\cup \{i\}$
 such that ${\mathcal P}_i$ contains the interval $[f_1x_t,f_1x_tx_i]$, or $[f_1x_tx_i, f_1x_tx_i]$.

 In this case changing in ${\mathcal P}_i$ the hinted interval with $[f_1,f_1x_tx_i]$ we get a partition of $I'/J'$ with sdepth $\geq d+2$ which is false.

As we have seen  above  we may suppose that
  in ${\mathcal P}_i$ there exists an interval $[b,c_1]$ with $c_1 \in (f_1)\cap (E')\subset (w_{13})$. {\bf It follows that} $w_{13}\in B'\cup C'$. We may assume that if $w_{13} \in B'$ then $x_i\not | w_{13}$, otherwise change $i$ by $j$. Thus $c_1=x_iw_{13}$ or $c_1=w_{13}$.  If $C' \cap (f_1x_i) = \{c_1\}$ then in ${\mathcal P}_j$ we  have the interval $[f_1x_i,c_1]$, that is we are  in Case 2. Then there exists another monomial $c' \in C'\cap(f_1x_i)$. We may suppose that $[c',c']$ is not  in ${\mathcal P}_i$, because otherwise we are in Case 2. If we have $[u,c']$ in ${\mathcal P}_i$ for some $u\in E'$ then $c'\in (w_{13})$ and so $c'=c_1$ if $w_{13}\in C'$, otherwise $c'=x_iw_{13}=c_1$ because $x_i\not|w_{13}$. Contradiction!   Then in ${\mathcal P}_i$  we have the interval $[f_2,c']$ or the interval $[f_2x_k,c']$ for some $k$. Thus  $c' \in (w_{12})$ {\bf and so} $w_{12}\in B'\cup C'$. Note that $w_{12}\not =w_{13}$ because $c_1\in (w_{13})\setminus (f_2)$.

{\bf Case 3}\ \
 $w_{12}, w_{13} \in C'$.

In this case $c_1=w_{13}$, $c'=w_{12}$ and so $f_2,f_3 \in (x_i)$. Then in ${\mathcal P}_i$ we have the interval $[f_1x_j,f_1x_jx_u], u \neq i$ and $f_1x_jx_u \notin (f_2,f_3)$ because $f_1x_jx_u \notin (x_i)$, that is we are in Case 1.

{\bf Case 4}\ \ $w_{12} \in B', w_{13} \in C' $.

  Thus $c_1=w_{13}$. We may assume that $w_{12} = x_1...x_{d+1},f_2 = x_2...x_{d+1}, i \neq d+1 \neq j$ and $c'=x_1...x_{d+1}x_i$. We also see that $f_3 \in (x_ix_j)$ because  $c_1=w_{13}$.
In ${\mathcal P}_i$ we have the interval $[f_1x_j,f_1x_jx_u], u \neq i$. If $u \neq d+1$ then $f_1x_jx_u \notin (f_2,f_3)$, that is we are in Case 1. Otherwise $u=d+1$, and so $x_jw_{12}\in C'$, in particular $f_2x_j\in B'$. We see that in ${\mathcal P}_i$ we can have either $w_{12} \in [f_2,c']$ or there exists an interval $[w_{12},w_{12}x_k]$. If $k=j$ then $w_{12}x_k$  is the end of the interval starting with $f_1x_j$, which is false. If $k=i$ then we are in Case 2. Thus $i \neq k \neq j$.

  When in ${\mathcal P}_i$  there exists the interval  $[w_{12},w_{12}x_k]$ then there exists also the interval $[f_1x_k,f_1x_kx_t]$. If $f_1x_kx_t\in (f_2)$ then $t=d+1$ and so $f_1x_kx_t=x_kw_{12}$ which is not possible because $x_kw_{12}$ is in $[w_{12},x_kw_{12}]$. If $f_1x_kx_t\in (f_3)$ then $\{k,t\}=\{i,j\}$ which is not possible since $k\not \in \{i,j\}$.  Then $f_1x_kx_t \notin (f_2,f_3)$, that is we are in Case 1. It remains the case when $w_{12}$ is in  the interval $[f_2,c']$. In ${\mathcal P}_i$ we have an interval $[f_2x_j,f_2x_lx_j]$ for some $l$. If $f_2x_jx_l\in (f_1)$ then $l=1$ and so  $f_2x_jx_l=x_jw_{12}$ which is already the end of the interval starting with $f_1x_j$. Contradiction ! Thus  $f_2x_lx_j \in (f_3)$, otherwise we are in Case 1. We get $l=i$ and changing $[f_2,c'],[f_2x_j,f_2x_ix_j]$ with $[f_2,f_2x_ix_j],[w_{12},c']$ we arrive in Case 2.

{\bf Case 5}\ \ $w_{12} \in C', w_{13} \in B'$.

 Thus we may assume that $w_{12}= c' = x_1...x_{d+1}x_i, f_2 = x_3...x_{d+1}x_i$.
 As $c_1\in (w_{13})$ we have $w_{13}\in \{f_1x_i,f_1x_j\}$.
  If $w_{13} = f_1x_i$ then in ${\mathcal P}_i$ we have an interval $[f_1x_j,f_1x_jx_u]$. If $f_1x_jx_u\in (f_2)$ then $u=i$. Also if $f_1x_jx_u\in (f_3)$ we get $ f_1x_j x_u\in (w_{13})$ and we get again $u=i$, that is we are in Case 2. Thus $f_1x_jx_u \notin (f_2,f_3)$ and we arrive in Case 1.

   Then, we may suppose that $w_{13} = f_1x_j$.
Since $f_1x_{d+1}|c'$ we see that $f_1x_{d+1}\in B'$.
In ${\mathcal P}_i$ we can  have the interval $[f_1x_{d+1},f_1x_{d+1}x_m]$. If $f_2x_{d+1}x_m\in (f_2)$ then $m=i$, that is we are in Case 2. Then  $f_2x_{d+1}x_m\in (f_3)$ because otherwise we are in Case 1. It follows that $m=j$ and we have $[f_1x_{d+1}, f_1x_{d+1}x_j]$ in ${\mathcal P}_i$. Then the  interval $[f_1x_j,f_1x_jx_p]$ existing in ${\mathcal P}_i$ has $p\not =j$ and also $p\not =i$ because otherwise we are in Case 2. Thus we must
 also have an interval $[f_1x_p,f_1x_px_k]$ with  $k\not =j$ and also $k\not= i$, otherwise we are in Case 2. Then  $f_1x_px_k \notin (f_2,f_3)$, that is we are in Case 1.

{\bf Case 6}  \ \ $w_{12},w_{13} \in B'$.

 We may assume that $w_{12} = x_1...x_{d+1}, f_2 = x_2...x_{d+1}$ and $c' = x_1...x_{d+1}x_i$. If $w_{23} \in B'$ then all $w_{ij}$ are different  and by Lemma \ref{el} we get $\depth_SI/J\leq d+1$. Thus we may suppose that
$w_{23} \in C'$. We may choose $f_3 = x_1x_3...x_dx_i$ or $f_3 = x_1x_3...x_dx_j$. If $f_3 = x_1x_3...x_dx_i$ then in ${\mathcal P}_i$ we have as above  the interval $[f_1x_j,f_1x_{d+1}x_j]$. Indeed, if we have $[f_1x_j,f_1x_mx_j]$ then $f_1x_mx_j\not\in (f_3)$ and so $f_1x_mx_j\in (f_2)$, otherwise we are in Case 1. It follows $m=d+1$. As $f_2x_j|x_jw_{12}=f_1x_{d+1}x_j$ we get $f_2x_j\in B'$.  Let $[f_2,f_2x_jx_k]$ or $[f_2x_j,f_2x_jx_k]$ be the existing interval of ${\mathcal P}_i$ containing $f_2x_j$. Note that $f_2x_jx_k\not \in (f_3)$ and if  $f_2x_jx_k\in (f_1)$ then $f_2x_jx_k=x_jw_{12}$ which appeared already in the previous interval. Thus
 $f_2x_jx_k \notin (f_1,f_3)$, that is we are in Case 1.

 It remains that $f_3=x_1x_3...x_dx_j$ and, as before, we have in ${\mathcal P}_i$ the interval $[f_1x_j,f_1x_{d+1}x_j]$. We see then $f_2x_j\in B'$ and  we  must have also an interval  $[f_2,f_2x_jx_k]$ or $[f_2x_j,f_2x_jx_k]$. If $f_2x_jx_k\in (f_1)\cup (f_3)$ then we get $k=1$ and so $f_2x_jx_k=x_jw_{12}$ which appeared in the previous interval. It follows that $f_2x_jx_k \notin (f_1,f_3)$, that is we are in Case 1.

\section{Proof of Theorem \ref{p2}}

Suppose that $E\not =\emptyset$ and $s\leq q+1$. We may assume that $|B\setminus E|\geq 2$ because otherwise $\depth_SI/J\leq d+1$ since the element induced by $f$ in $I/J$ is annihilated by all variables but one and those from $\supp f$.
For   $b=fx_{i}\in B$ set $I_b=(B\setminus\{b\})$, $J_b=J\cap I_{b}$. If $\sdepth_SI_{b}/J_{b}\geq d+2$ then let ${\mathcal P}_b$ be a partition on $I_{b}/J_{b}$ with sdepth $d+2$. We may choose ${\mathcal P}_b$ such that each interval starting with a squarefree monomial of degree $d$, $d+1$ ends with a  monomial of $C$. In $ {\mathcal P}_{b}$ we  have some intervals for all $b' \in B\setminus\{b\}]$ an interval $[b',c_{b'}]$. We define $h:B\setminus\{b\}\to C$ by  $b'\to c_{b'}$. Then $h$ is an injection and $|\Im h|= s-1\leq q$ (if $s=1+q$ then $h$ is a bijection).
We may suppose that all intervals   of $ {\mathcal P}_{b}$ starting with a monomial $v$ of degree $\geq d+2$ have the form $[v,v]$.

\begin{Lemma}\label{ml} Suppose that the following conditions hold:
\begin{enumerate}
\item{}  $s\leq q+1$,
\item{}  $\sdepth_SI_{b}/J_b\geq d+2$, for a $ b\in B\cap (f)$,
\item{} $C\subset ((f)\cap (E)) \cup (\cup_{a,a'\in E, a\not =a'}(a)\cap (a'))$.
\end{enumerate}
Then either  $\sdepth_SI/J\geq d+2$, or there exists a nonzero ideal $I'\subsetneq I$ generated by a subset  of $\{f\}\cup B$ such that $\sdepth_S I'/J'\leq d+1$ for  $J'=J\cap I'$ and $\depth_SI/(J,I')\geq d+1$.
\end{Lemma}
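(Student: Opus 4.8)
The plan is to turn the statement into a question about extending the given partition $\mathcal{P}_b$ of $I_b/J_b$ to a partition of $I/J$, and to resolve that question by a Hall-type matching argument. The first step is a structural reduction: I claim that $\mathcal{P}_b$ already covers everything in $P_{I\setminus J}$ except $f$ and $b=fx_i$, i.e. $P_{I\setminus J}=P_{I_b\setminus J_b}\cup\{f,b\}$ disjointly. Indeed, any squarefree monomial $m\in I\setminus J$ of degree $\geq d+2$ is a multiple of $f$ or of some $a\in E$. If $f|m$ then $m=f x_{j_1}\cdots x_{j_t}$ with $t\geq 2$, and every $fx_{j_l}\notin J$ (otherwise $m\in J$), so all $fx_{j_l}\in B$; since $t\geq 2$, at least two of them are distinct and at most one equals $b$, hence some $fx_{j_l}\in B\setminus\{b\}$ divides $m$. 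If instead $f\nmid m$ then some $a\in E\subseteq B\setminus\{b\}$ divides $m$. In both cases $m\in I_b$, and since $m\notin J$ also $m\notin J_b$. The same argument applied to $C$ shows $C\subseteq I_b$. Thus extending $\mathcal{P}_b$ to $I/J$ is exactly the problem of inserting the two monomials $f$ (degree $d$) and $b$ (degree $d+1$) into intervals whose top has degree $\geq d+2$.

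Next I set up the matching. Because $\mathcal{P}_b$ is normalized, it consists of the two-element intervals $[b',h(b')]$ for $b'\in B\setminus\{b\}$, with $h$ an injection into $C$, together with singletons on the remaining elements of $C$ and on all monomials of degree $\geq d+3$. To cover $f$ and $b$ one forms a single interval $[f,c]$ with $c=fx_ix_k\in C$, which simultaneously absorbs $f$, $b=fx_i$ and $fx_k$; the displaced monomials $h(fx_k)$ and, if $c\in\Im h$, the mate $h^{-1}(c)$ must then be re-covered, the former for free by a singleton and the latter by re-matching along an alternating chain. Counting intervals, covering $f$ together with all of $B$ consumes $1+(s-2)=s-1$ distinct elements of $C$, and hypothesis (1) in the form $s\leq q+1=|C|+1$ is exactly the budget $s-1\leq|C|$ that makes such a system of distinct representatives numerically possible. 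Its actual existence is decided by Hall's marriage theorem applied to the divisibility bipartite graph between $B$ and $C$. Here hypothesis (3) is what keeps the displaced $c$'s re-routable: every $c\in C$ lies above two distinct generators of $I$, hence is divisible by two distinct elements of $B$, so it always offers an alternative interval.

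I then run Hall's theorem to obtain the dichotomy. If the matching (the alternating re-covering of the displaced elements) succeeds, the rearranged family is a partition of $P_{I\setminus J}$ all of whose tops have degree $\geq d+2$, so $\sdepth_S I/J\geq d+2$, which is the first alternative. If it fails, the deficiency form of Hall's theorem produces a set $T\subseteq B$ whose divisibility neighbourhood in $C$ is too small to be matched once $b$ is forced in; I take $I':=(f,T)$, a nonzero ideal generated inside $\{f\}\cup B$, and proper because $T$ can be taken to omit at least one minimal generator of $I$ from $E$. The crucial clean point is that $f\in I'$, so the quotient $I/(J+I')$ is generated in degrees $\geq d+1$, whence $\depth_S I/(J+I')\geq d+1$ by \cite[Proposition 3.1]{HVZ} (equivalently \cite[Lemma 1.1]{P}); this gives one of the two required properties for free. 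The remaining property, $\sdepth_S I'/J'\leq d+1$, is read off from the deficiency: the monomials of $T$ that Hall's condition refuses to match force any Stanley decomposition of $I'/J'$ to contain an interval whose top has degree $\leq d+1$.

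The hard part will be the last verification, namely converting the Hall-deficient set $T$ into a genuine upper bound $\sdepth_S I'/J'\leq d+1$ and choosing $T$ so that $I'=(f,T)$ is simultaneously proper and minimal enough to expose the obstruction; one must also check that the alternating re-matching in the success case never re-disturbs $\{f,b\}$ so that the produced family really is a partition. I expect hypothesis (3) to be indispensable precisely at the delicate juncture: without the guarantee that each $c\in C$ sits above two generators, a displaced $c$ could turn into a maximal degree-$(d+2)$ element trapped in a one-element interval in the wrong place, which would drop the complementary quotient's depth to $d$ and destroy the dichotomy. The rest of the argument is bookkeeping built on the structural identity $P_{I\setminus J}=P_{I_b\setminus J_b}\cup\{f,b\}$ and on Rauf's \cite[Lemma 2.2]{R} used through the exact sequence $0\to I'/J'\to I/J\to I/(J+I')\to 0$.
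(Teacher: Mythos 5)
Your setup is sound and is, in spirit, the paper's own: the identity $P_{I\setminus J}=P_{I_b\setminus J_b}\cup\{f,b\}$, the normalized partition ${\mathcal P}_b$ with its injection $h:B\setminus\{b\}\to C$, and your ``alternating chains'' are precisely the paths along which the paper's proof walks. The genuine gap is that you replace the path analysis by an appeal to Hall's marriage theorem, and Hall's theorem does not decide this problem. Extending ${\mathcal P}_b$ is not a plain bipartite matching question: to absorb $f$ and $b$ one must force an interval $[f,bx_u]$ (and such a $c=bx_u\in C$ need not even exist, yet the lemma must still output $I'$ in that case), each displacement cascades along $h$, and --- the crux the paper isolates --- a cascade can land back in $(b)$ (the ``bad paths'' with $h(a_k)=bx_{u}$), where no augmentation is possible and the construction must restart at $fx_u$ after explicit interval surgeries; your numerical budget $s-1\leq q$ has no bearing on Hall's subset condition, and hypothesis (1) is in fact never used as a counting bound in this way. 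You yourself defer exactly the two load-bearing verifications (``the hard part will be\dots''): converting a deficiency certificate into $\sdepth_S I'/J'\leq d+1$, and checking that the re-matching in the success branch never re-disturbs $\{f,b\}$. Those deferred steps \emph{are} the proof.

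Moreover, your candidate $I'=(f,T)$ with $T$ the Hall-deficient set is oriented backwards relative to what works. The paper takes $T$ to be the set \emph{reachable} by paths and generates $I'$ by the complement $G=B\setminus T$, adjoining $f$ only when $T\cap(f)=\emptyset$. That orientation delivers all the conclusions at once: $h(b')\notin I'$ for every $b'\in T$, so the restriction of ${\mathcal P}_b$ shows that $I/(J,I')$ has sdepth $\geq d+2$ and depth $\geq d+1$, and then $\sdepth_S I'/J'\leq d+1$ follows \emph{indirectly} from \cite[Lemma 2.2]{R} (if the first term of $0\to I'/J'\to I/J\to I/(J,I')\to 0$ had sdepth $\geq d+2$, so would $I/J$, which is the first alternative) --- no deficiency count is ever converted into an sdepth upper bound; and properness of $I'$ is automatic since $T$, respectively $f$, lies outside $I'$. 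With your orientation, properness fails whenever $T\supseteq E$ (your ``$T$ can be taken to omit at least one minimal generator'' is unsupported), the bound $\sdepth_S I'/J'\leq d+1$ would require the normalization-plus-injection argument in the style of \cite[Theorem 4.3]{P} that you have not supplied, and your ``crucial clean point'' $f\in I'$ cannot be maintained in general: when the relevant set meets $(f)$ the paper is forced to drop $f$, take $I'=(G)$, and secure $\depth_S I/(J,I')\geq d+1$ and sdepth $\geq d+2$ there by a different device (substituting the interval $[a,h(a)]$ by $[f,h(a)]$). So the skeleton is right, but as written the dichotomy is not established.
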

\begin{proof}
Consider  $h$ as above for a partition ${\mathcal P}_b$  with sdepth $d+2$ of $I_b/J_b$ which exists by (2). A sequence $a_1, \ldots , a_k$ is called a {\em path} from $a_1$ to $a_k$ if $a_i\in B \setminus \{b\}$, $i \in [k]$,
$a_i \not = a_j$
for $1 \leq i < j \leq k$, $a_{i+1}|h(a_i)$ for $1 \leq i < k$, and $h(a_i)\not\in (b)$ for $1 \leq i < k$.
This path is {\em bad} if $h(a_k) \in (b)$ and it is {\em maximal} if all divisors from B of $h(a_k)$ are in
$\{b, a_1,\ldots , a_k\}$. If $a = a_1$ we say that the above path {\em starts with} $a$. Since $|B\setminus E|\geq 2$ there
exists $a_1 \in B \setminus \{b\}$. Set $c_1 = h(a_1)$. If $c_1 \in (b)$ then the path $\{a_1\}$ is maximal and
bad. By recurrence choose if possible $a_{p+1}$ to be a divisor from $B$ of $c_p$  which is not
in $\{b, a_1,\ldots , a_p\}$ and set $c_p = h(a_p)$, $p \geq 1$. This construction ends at step $p = e$ if
all divisors from $B$ of $c_{e-1}$ are in $\{b, a_1, \ldots , a_{e-1}\}$. If $c_i \not \in (b)$ for $1 \leq i < e - 1$ then
$\{a_1, \ldots , a_{e-1}\}$ is a maximal path. If $c_{e-1} \in (b)$ then this path is also bad. We have
two cases:

1) there exist no maximal bad path starting with $a_1$,

2) there exists a maximal bad path starting with $a_1$.

In the first case, set $T_1 = \{b' \in B : \mbox{there\ exists\ a \  path}\ a_1, \ldots , a_k \ \mbox{with}\ a_k = b'\}$,
$G_1 = B \setminus T_1$ and $I'_1 = (f, G_1)$, $I''_1 = (G_1)$, $J'_1 = I'_1 \cap J$, $J''_1 = I''_1 \cap J$. Note that $I''_1\not = 0$
because $b \in I''_1$. Consider the following exact sequence
$$0 \to I'_1/J'_1 \to I/J \to I/(J, I'_1) \to 0.$$
If $T_1 \cap (f) = \emptyset$ then the last term has depth $\geq d + 1$ and sdepth $\geq d + 2$ using the
restriction of ${\mathcal P}_b$ since $ h(b') \not \in I'_1$, for all $b' \in T_1$. If the first term has sdepth $\geq d + 2$
then by \cite[Lemma 2.2]{R} the middle term has sdepth $\geq d + 2$. Otherwise, the first
term has sdepth $\leq d + 1$ and we may take $I' = I'_1$.

If let us say $a \in (f)$ for some
$a \in  T_1$ then in the following exact sequence
$$0 \to I''_1/J''_1 \to I/J \to I/(J, I''_1) \to 0$$
the last term has sdepth $\geq d + 2$ and depth $\geq d + 1$ since $h(a) \not \in I''_1$
and we
may substitute the interval $[a, h(a)]$ from the restriction of ${\mathcal P}_b$ to $(T_1)$ by $[f, h(a)]$,
the second monomial from $[f, h(a)]\cap B$ being also in $T_1$. As above we get either
$\sdepth_SI/J = d + 2$, or $\sdepth_SI''_1 /J''_1 \leq  d + 1$, $\depth_SI/(J, I''_1) \geq d + 1$.

In the second case, let $a_1, \ldots , a_{t_1}$ be a maximal bad path starting with $a_1$. Set
$c_j = h(a_j)$, $j \in [t_1]$. Then $c_{t_1} = bx_{u_1}$ for some $u_1$ and let us say $b = fx_i$. If $a_{t_1} \in (f)$ then
changing in ${\mathcal P}_b$ the interval $[a_{t_1}, c_{t_1}]$ by $[f, c_{t_1}]$ we get a partition on $I/J$ with sdepth
$d + 2$. Thus we may assume that $a_{t_1} \in E$. If $fx_{u_1} \in \{a_1, \ldots, a_{t_1-1}\}$, let us say
$fx_{u_1} = a_v$, $1 \leq v < t_1$ then we may replace in ${\mathcal P}_b$ the intervals $[a_p, c_p]$, $v \leq p \leq t_1$
with the intervals $[a_v, c_{t_1}]$, $[a_{p+1}, c_p]$, $v \leq  p < t_1$. Now we see that we have in ${\mathcal P}_b$ the
interval $[fx_{u_1}, fx_ix_{u_1}]$ and switching it with the interval $[f, fx_ix_{u_1}]$ we get a partition
with sdepth $\geq d + 2$ for $I/J$.
Thus we may assume that $fx_{u_1} \not \in \{  a_1,\ldots, a_{t_1}\}$. Now set $a_{t_1+1} = fx_{u_1}$. Let
$a_{t_1+1}, \ldots , a_k$ be a path starting with $a_{t_1+1}$ and set $c_j = h(a_j)$, $t_1 < j \leq k$. If $a_p = a_v$
for $v < t_1$, $p > t_1$ then change in ${\mathcal P}_b$ the intervals $[a_j, c_j]$, $v \leq j \leq p$ with the intervals
$[a_v, c_p]$, $[a_{j+1}, c_j]$, $v \leq j < p$. We have in ${\mathcal P}_b$ an interval $[fx_{u_1}, fx_ix_{u_1}]$ and switching it
to $[f, fx_ix_{u_1}]$ we get a partition with sdepth $\geq d + 2$ for $I/J$. Thus we may suppose
that in fact $a_p\not\in  \{b, a_1, \ldots , a_{p-1}\}$ for any $p > t_1$ (with respect to any path starting
with $a_{t_1+1}$). We have again two subcases:

$1')$ there exist no maximal bad path starting with $a_{t_1+1}$,

$2')$ there exists a maximal bad path starting with $a_{t_1+1}$.

In $1')$ set $T_2 = \{b' \in B : \mbox{there\ exists\ a \  path}\ a_{t_1+1}, \ldots , a_k \ \mbox{with}\ a_k = b'\}$, $ G_2 = B \setminus T_2$
 and $I'_2 = (f, G_2)$, $I''_2 = (G_2)$, $J'_2 = I'_2 \cap J$, $J''_2 = I''_2 \cap J$. As above,
we see that if $T_2 \cap (f) = \emptyset$ then we may take $I' = I'_2$ and if $T_2 \cap (f)\not =\emptyset$ then $I' = I''_2
$ works.

In the second case, let $a_{t_1+1}, \ldots , a_{t_2}$ be a maximal bad path starting with $a_{t_1+1}$
and set $c_j = h(a_j)$ for $j > t_1$. As we saw the whole set $\{a_1, \ldots , a_{t_2}\}$ has different
monomials. As above $c_{t_2} = bx_{u_2}$ and we may reduce to the case when $fx_{u_2} \not\in
\{a_1, \ldots , a_{t_1}\}$. Set $a_{t_2+1} = fx_{u_2}$ and again we consider two subcases, which we treat
as above. Anyway after several such steps we must arrive in the case $p = t_m$ when
$b|c_{t_m}$ and again a certain $fx_{u_m}$ is not among $\{a_1, \ldots , a_{t_m}\}$ and taking $a_{t_m+1} = fx_{u_m}$
there exist no maximal bad path starting with $a_{t_m+1}$. This follows since we may
reduce to the case when the set $\{a_1,\ldots  , a_{t_m}\}$ has different monomials and so the
procedures should stop for some m. Finally, using
 $T_m = \{b' \in B : \mbox{there\ exists\ a \  path}\ a_{t_m+1}, \ldots , a_k\  \mbox{with}\ a_k = b'\}$
as $T_1$ above we are done.
\end{proof}

Now Theorem \ref{p2} follows from the next proposition, the case $s>q+1$ being a consequence of \cite{P1} (see here Theorem \ref{t}).
\begin{Proposition}  Suppose that $I \subset S$ is minimally generated by a  squarefree monomial $f$ of degree $d$,  and a  set $E$ of squarefree monomials of degrees $\geq d+1$.   Assume that
 $\sdepth_S I/J= d+1$ and  $s\leq q+1$.
Then $\depth_S I/J \leq d+1$.
\end{Proposition}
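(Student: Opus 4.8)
The plan is to induct on $|E|$, reducing first to the case where every generator in $E$ has degree $d+1$ and then feeding the situation into Lemma \ref{ml}. The base case $E=\emptyset$ is Lemma \ref{1}, so I assume $|E|\geq 1$ and that the statement is known for strictly smaller $E$. As already observed I may assume $|B\setminus E|\geq 2$, so there is at least one $b=fx_i\in B\cap(f)$. The first move is to dispose of higher-degree generators. If $E$ contains a generator of degree $\geq d+2$, set $I^*=(f,E\cap B)$ and $J^*=I^*\cap J$. Since $I^*\supset(f)$ and $I^*\supset(E\cap B)$, every squarefree monomial of $I\setminus J$ of degree $\leq d+1$ lies in $I^*$, so $I/(J,I^*)$ is generated in degrees $\geq d+2$ and has depth and sdepth $\geq d+2$ by \cite[Proposition 3.1]{HVZ}. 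Applying \cite[Lemma 2.2]{R} to $0\to I^*/J^*\to I/J\to I/(J,I^*)\to 0$ together with $\sdepth_S I/J=d+1$ gives $\sdepth_S I^*/J^*\leq d+1$; as $|E\cap B|<|E|$, induction yields $\depth_S I^*/J^*\leq d+1$, and the Depth Lemma gives $\depth_S I/J\leq d+1$. Thus I may assume $E\subseteq B$, i.e. every generator of $E$ has degree $d+1$.

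In this case the key computation is that for $b=fx_i$ the quotient $I/(J,I_b)$ is cyclic: since $b\notin E$ we have $E\subset B\setminus\{b\}$, hence $I=(f)+I_b$ and $I/(J,I_b)\cong (f)/\big((f)\cap(J+I_b)\big)\cong S/Q$ up to the shift by $d$, where $Q=\big((J+I_b):f\big)$. A short squarefree-divisibility check shows $Q=(x_j:\ j\notin\supp f\cup\{i\})$: each such $x_j$ lies in $Q$ because $fx_j\in I_b$ or $fx_j\in J$, while no monomial supported on $\supp f\cup\{i\}$ enters $Q$ (using $f\notin J$ and $fx_i\notin E\cup J$). Hence $I/(J,I_b)\cong K[\supp f,x_i]$ has depth exactly $d+1$. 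Now I split on $b$: if some $b\in B\cap(f)$ has $\sdepth_S I_b/J_b\leq d+1$, then, $I_b$ being generated in degree $d+1$, this sdepth equals $d+1$, so $\depth_S I_b/J_b=d+1$ by Theorem \ref{P}, and the Depth Lemma applied to the sequence above gives $\depth_S I/J\leq d+1$ at once.

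It remains to treat the case $\sdepth_S I_b/J_b\geq d+2$ for every $b\in B\cap(f)$, which is exactly hypothesis (2) of Lemma \ref{ml}; hypothesis (1) is the assumption $s\leq q+1$. For hypothesis (3) I would invoke Lemma \ref{l}, whose standing requirement holds by induction: either $C\not\subset(E)$ or $C\not\subset(f,E\setminus\{a\})$ for some $a$, in which case Lemma \ref{l} already gives $\depth_S I/J\leq d+1$, or else $C\subset(E)$ and $C\subset(f,E\setminus\{a\})$ for all $a$, which for any $c\in C$ forces $c\in(f)\cap(E)$ or $c\in(a)\cap(a')$ with $a\neq a'$, i.e. hypothesis (3). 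With (1)--(3) in force I apply Lemma \ref{ml}. Its first alternative $\sdepth_S I/J\geq d+2$ contradicts $\sdepth_S I/J=d+1$, so I obtain $I'\subsetneq I$ generated by a subset of $\{f\}\cup B$ with $\sdepth_S I'/J'\leq d+1$ and $\depth_S I/(J,I')\geq d+1$. If this subset omits $f$ then $I'$ is generated in degree $d+1$ and Theorem \ref{P} gives $\depth_S I'/J'\leq d+1$; otherwise $I'=(f,E')$ with $E'\subsetneq E$ (strict because $I'\subsetneq I$), and induction applies. Either way the Depth Lemma closes the argument.

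The hard technical content is packaged in Lemma \ref{ml}, whose partition-surgery manufactures the subideal $I'$; within the present argument the delicate points are the cyclic-quotient identification $I/(J,I_b)\cong K[\supp f,x_i]$, which neutralizes the first alternative of Lemma \ref{ml} and makes the easy case above work, and the bookkeeping showing that the $I'$ returned by Lemma \ref{ml} is genuinely a smaller instance (a proper subideal with $E'\subsetneq E$, or one generated purely in degree $d+1$), so that the induction on $|E|$ is well founded. I expect this well-foundedness, rather than any single estimate, to be the main thing to get right.
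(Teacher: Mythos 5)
Your proof is correct and follows essentially the same route as the paper: induction on $|E|$ with base case Lemma \ref{1}, reduction to $E$ generated in degree $d+1$, Lemma \ref{l} (justified by the induction hypothesis) to secure condition (3) of Lemma \ref{ml}, the dichotomy on $\sdepth_S I_b/J_b$ settled via Theorem \ref{P} and the Depth Lemma, and finally Lemma \ref{ml} producing the smaller ideal $I'$ -- your only departures are expository, namely a direct exact-sequence argument for the degree reduction where the paper simply cites \cite[Lemma 1.6]{PZ}, and an explicit identification $I/(J,I_b)\cong K[\supp f, x_i]$ where the paper says ``as in Case 1 from the previous section.'' One small point to tighten: when you invoke ``induction'' on $I^*/J^*$ and on $I'=(f,E')$, the hypothesis $s\leq q+1$ of the Proposition need not persist for the smaller pair (passing to $I^*$ keeps $s$ but can shrink $q$), so the induction should formally run on the statement of Theorem \ref{p2}, whose case $s>q+1$ is covered by Theorem \ref{t} and whose case $\sdepth=d$ by Theorem \ref{P} -- which is exactly what the paper's phrase ``induction on $|E|$'' implicitly means, and what you yourself already do when verifying the standing hypothesis of Lemma \ref{l}.
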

\begin{proof} Apply induction on $|E|$, the case $E=\emptyset $ follows from Lemma \ref{1}. Suppose that $|E|>0$.
We may assume  that  $E$ contains just monomials of degrees $d+1$  by \cite[Lemma 1.6]{PZ}. Using Theorem \ref{P} and induction on $|E|$ apply Lemma \ref{l}. Thus we may suppose that  $C\subset ((f)\cap (E)) \cup (\cup_{a,a'\in E, a\not =a'}(a)\cap (a'))$.

Let $b\in (B\cap (f))$ and $I'_b=(B\setminus \{b\})$. Set $J'_b=I'_b\cap J$.  Clearly $b\not \in I'_b$.
  As in Case 1 from the previous section we see that if $\sdepth_S I'_b/J'_b\leq d+1$ then $\depth_S I'_b/J'_b\leq d+1$ by Theorem \ref{P} and so $\depth_S I/J\leq d+1$ by the Depth Lemma. Thus we
  may suppose that $\sdepth_SI'_b/J'_b\geq  d+2$. Applying Lemma \ref{ml} we get either $\sdepth_SI/J\geq d+2$ contradicting our assumption, or there
 exists a nonzero ideal $I'\subsetneq I$ generated by a subset  of $\{f\}\cup B$ such that $\sdepth_S I'/J'\leq d+1$ for  $J'=J\cap I'$ and $\depth_SI/(J,I')\geq d+1$. In the last case we see that  $\depth_S I'/J'\leq d+1$ by induction hypothesis on $|E|$ and so $\depth_SI/J\leq d+1$ by the Depth Lemma applied to the following exact sequence
$$0\to I'/J'\to I/J\to I/(J,I')\to 0.$$
\hfill\ \end{proof}

\noindent\textbf{Acknowledgement}\textit{.} Research partially supported by  grant ID-PCE-2011-1023 of Romanian Ministry of Education, Research and Innovation.

Andrei Zarojanu was supported by the strategic grant POSDRU/159/1.5/S/137750, "Project Doctoral and Postdoctoral programs support for increased competitiveness in Exact Sciences research" cofinanced by the European Social Found within the Sectorial Operational Program Human Resources Development $2007 - 2013$.

\smallskip\noindent 
%Received: 

%\salt 
\address{Dorin Popescu,\\ Simion Stoilow Institute of Mathematics of Romanian Academy, Research unit 5,
 P.O.Box 1-764, Bucharest 014700, Romania\\
E-mail:{ dorin.popescu @ imar.ro}
}

\address{Andrei Zarojanu, \\ Faculty of Mathematics and Computer Sciences, University
of Bucharest,\\ Str. Academiei 14, Bucharest, Romania,  and}
\address{Simion Stoilow Institute of Mathematics of Romanian Academy, Research group of the project  ID-PCE-2011-1023,
 P.O.Box 1-764, Bucharest 014700, Romania\\
E-mail:{ andrei\_zarojanu @ yahoo.com}
}
%\adresa{Department of ...,\\University of ...\\ 
%E-mail: {\tt user\ap univ-x.com}


\begin{thebibliography}{99}                                                                                                %

\bibitem{BHK} C.\ Biro, D.M.\ Howard, M.T.\ Keller, W.T.\ Trotter, S.J.\ Young, {\em Interval partitions and Stanley depth}, J. Combin. Theory Ser. A {\bf 117}
(2010), 475-482.
\bibitem{BKU} W.\ Bruns, C.\ Krattenthaler, J.\ Uliczka, {\em Stanley decompositions and Hilbert depth in the Koszul complex}, J. Commutative Alg., {\bf 2} (2010), 327-357.
\bibitem{Ci} M.\ Cimpoeas, {\em The Stanley conjecture on monomial almost complete intersection ideals}, Bull.
Math. Soc. Sci. Math. Roumanie, {\bf 55(103)}, (2012), 35-39.
\bibitem{HJY} J.\ Herzog, A.\ Soleyman Jahann, S.\ Yassemi, {\em Stanley decompositions and partitionable simplicial complexes}, J. Algebr. Comb., {\bf 27}, (2008), 113-125.
\bibitem{HVZ} J.\ Herzog, M.\ Vladoiu, X.\ Zheng, {\em How to compute the Stanley depth of a monomial ideal,}  J.  Algebra, {\bf 322} (2009), 3151-3169.
\bibitem{HPV} J.\ Herzog, D.\ Popescu, M.\ Vladoiu, {\em Stanley depth and size  of a monomial ideal}, Proc.  Amer. Math. Soc., {\bf 140} (2012), 493-504.
\bibitem{IM} B.\ Ichim, J. J.\ Moyano-Fernandez, {\em How to compute the multigraded Hilbert depth of a module}, to appear in Math. Nachr., arXiv:AC/1209.0084.
\bibitem{Is} M.\ Ishaq, {\em Upper bounds for the Stanley depth}, Comm. Algebra, {\bf 40}(2012), 87-97.

\bibitem{I} M. Ishaq, {\em Values and bounds of the Stanley depth},  Carpathian J.
Math. {\bf 27}, (2011), 217-224, arXiv:AC/1010.4692.
\bibitem{L} G.\ Lyubeznik, {\em On the Arithmetical Rank of Monomial ideals}, J. Algebra {\bf 112}, 86-89  (1988).

\bibitem{AP} A.\ Popescu, {\em Special Stanley Decompositions}, Bull. Math. Soc. Sc. Math. Roumanie, {\bf 53(101)}, no 4 (2010), arXiv:AC/1008.3680.
\bibitem{P2} D.\ Popescu,  {\em An inequality between depth and Stanley depth}, Bull. Math. Soc. Sc. Math. Roumanie 52(100), (2009), 377-382,
 \bibitem{P3} D.\ Popescu, Stanley conjecture on intersections of four monomial prime ideals,
Communications in Alg., {\bf 41} (2013), 1-12, arXiv:AC/1009.5646.
\bibitem{P0} D.\ Popescu, {\em Depth and minimal number of generators of square free monomial ideals}, An. St. Univ. Ovidius, Constanta, {\bf 19} (2), (2011), 187-194.
\bibitem{P} D.\ Popescu, {\em Depth of  factors of square free  monomial  ideals},  Proceedings of AMS {\bf 142} (2014), 1965-1972, arXiv:AC/1110.1963.
\bibitem{P1} D.\ Popescu, {\em Upper bounds of depth of monomial ideals},  J. Commutative Algebra, {\bf 5}, 2013, 323-327, arXiv:AC/1206.3977.
\bibitem{PZ} D.\ Popescu, A.\ Zarojanu, {\em Depth of some square free monomial ideals}, Bull. Math. Soc. Sci. Math. Roumanie, {\bf 56(104)}, 2013,117-124.
\bibitem{PZ1} D.\ Popescu, A.\ Zarojanu, {\em Depth of  some special  monomial  ideals},  Bull. Math. Soc. Sci. Math. Roumanie, {\bf 56(104)}, 2013, 365-368, arXiv:AC/1301.5171v1.
\bibitem{R} A. Rauf, {\em Depth and Stanley depth of multigraded modules}, Comm.  Algebra, {\bf 38} (2010),773-784.
\bibitem{Sh} Y.H. \ Shen, {\em Lexsegment ideals of Hilbert depth 1}, (2012), arXiv:AC/1208.1822v1.
\bibitem{S} R.\ P.\ Stanley, {\em Linear Diophantine equations and local cohomology}, Invent. Math. {\bf 68} (1982) 175-193.
\bibitem{U} J.\ Uliczka, {\em Remarks on Hilbert series of graded modules over polynomial rings}, Manuscripta Math., {\bf 132} (2010), 159-168.
\bibitem{Z} A.\ Zarojanu, Stanley Conjecture on three monomial primary ideals, Bull. Math. Soc. Sc. Math.
Roumanie, {\bf 55(103)},(2012), 335-338.\end{thebibliography}
\end{document}